\renewcommand{\nabla}{D}
\newtheorem{theorem}{Theorem}[section]
\newtheorem{lemma}[theorem]{Lemma}
\newtheorem{proposition}[theorem]{Proposition}
\newtheorem{definition}[theorem]{Definition}
\newcommand{\dist}{\mathrm{dist}}
\newcommand{\veps}{\varepsilon}
\newcommand{\R}{\mathbb{R}}
\newcommand{\dd}{\mathrm{d}}
\newcommand{\meas}{\mathrm{meas}}
\newcommand{\Ib}{I_{\eps}}
\newcommand{\Iob}{J_{\eps}}
\newcommand{\xt}{x_{\rm c}}
\newcommand{\supp}{\mathrm{supp}}
\numberwithin{equation}{section}
\newcommand{\Ho}{H^{1}_{\pm}}
\newcommand{\bbR}{\mathbb{R}}
\newcommand{\eps}{\varepsilon}
\newcommand{\xs}{x^{\star}}
\begin{document}
\setlength{\baselineskip}{10pt}
\title{$\Gamma-$Limit for Transition Paths\\
of Maximal Probability}
\author{
F.J. Pinski
\footnote{E-mail address: frank.pinski@uc.edu} \\
        Physics Department\\
        University of Cincinnati\\
	PO Box 210011\\
	Cincinnati OH 45221, USA\\
and\\
        A.M. Stuart and F. Theil
\footnote{E-mail address: \{a.m.stuart,f.theil\}@warwick.ac.uk.} \\
        Mathematics Institute \\
        Warwick University \\
        Coventry CV4 7AL, UK
                    }
\maketitle

\begin{abstract}
Chemical reactions can be modelled via diffusion
processes conditioned to make a transition between
specified molecular configurations representing
the state of the system before and after the chemical
reaction. In particular the model of Brownian
dynamics -- gradient flow subject to additive noise --
is frequently used. If the chemical reaction is specified
to take place on a given time interval, then
the most likely path taken by the system is a
minimizer of the Onsager-Machlup functional.
The $\Gamma-$limit of this functional is determined
in the case where the
temperature is small and the transition time
scales as the inverse temperature.
\end{abstract}

\section{Introduction}
\label{sec:intro}

In this paper we study the problem of
determining the most likely paths
taken by a system undergoing a
chemical reaction. We employ the model
known as {\em Brownian dynamics} \cite{AT87}:
the atomic positions are assumed to
be governed by a gradient flow in an interaction
potential, subject to additive thermal noise.
The resulting stochastic differential
equation is conditioned to make a transition between
two different atomic confirgurations representing
the state of the system before and after the chemical
reaction \cite{BDGC}.
If the chemical reaction is specified
to take place on a given time interval, then
the most likely path taken by the system is a
minimizer of the Onsager-Machlup functional
\cite{durrbach,ikeda-watanabe-89}. In \cite{ps09}
numerical computations are presented which study minimizers
of this functional for a variety of model problems
in low dimension, together with some higher dimensional
problems from physics and chemistry such as vacancy
diffusion and the Lennard-Jones cluster of $38$ atoms.
The minimizers exhibit a number of interesting
effects, including multiple pathways between
configurations, together with transition paths
which concentrate at saddle points of the potential,
and not at minima. The computations in \cite{ps09}
were performed at low temperature over fixed long intervals.
This paper is concerned with determining the $\Gamma-$limit
(see \cite{Braid02,dal_maso})
of the Onsager-Machlup functional in the case where the
temperature is small and the transition time
scales as the inverse temperature.
Minimizers of the Onsager-Machlup functional have
received considerable attention in the chemistry
literature (see \cite{OE99} and \cite{ps09} and
the references therein). One of the motivations for
the work in \cite{ps09}
is to show that there are certain artefacts
in this minimization procedure which could be
construed as unphysical. In this paper we provide
a mathematical theory to explain the
computational observations in \cite{ps09}.
The $\Gamma-$limit which we exhibit may also
be of independent interest.
The $\Gamma-$limit can be
optimized via knowledge of the critical points
of the potential, the trace of the Hessian
of the potential at the critical points, and
certain heteroclinic orbits connecting them.
Notably these heteroclinic orbits are in a
Hamiltonian system; heteroclinic orbits in the
forward or backward
gradient flow found from our model at zero
temperature are solutions of this
Hamiltonian problem but not necessarily vice versa.
Regarding the role of the Hessian in the
$\Gamma-$limit, it is pertinent to mention
the paper \cite{voss08} in
which second derivative of the potential plays
a role in a large deviations principle for
SDEs.

In section \ref{sec:setup} we provide a precise
mathematical description of the conditioned SDE
which forms our mathematical model. We provide
an informal derivation of the Onsager-Machlup
functional whose minimizers determine most likely
transition paths and we explain the sense in which
this informal argument can be made rigorous.
In section \ref{sec:gamma} we compute the
$\Gamma-$limit
of the Onsager-Machlup functional building on
related analyses in \cite{ks89,sb90}.
The paper concludes, in section \ref{sec:num},
with an informal characteriztion of the
$\Gamma-$limit, together with numerical experiments
which illustrate this characterization.

\section{Set-Up}
\label{sec:setup}

Consider the following conditioned SDE for
$x \in C([0,T];\bbR^{N})$ making a transition
between two states $x^-$ and $x^+$ in time $T$:
\begin{align}
\label{eq:SDE}
\begin{split}
&\dd x=-\nabla V(x)\dd t+\sqrt{2\eps}\,\dd W,\\
&x(0)=x^-\quad{\mbox {and}}\quad x(T)=x^+.
\end{split}
\end{align}
Here $V:\bbR^{N} \to \bbR$ is the potential, $W$
is a standard Brownian motion in $\bbR^{N}$
and $\eps \ll 1.$
In many applications in physics/chemistry
$N=nd$, $d\in \{1,2,3\}$ being the physical
dimension and $n$ the number of particles. Then $\eps$ is
the (non-dimensional) temperature and
$x \in C([0,T];\bbR^{N})$
denotes
the configurational path of the $n$ atoms
making a transition between $x^{\pm}$.
Throughout we assume that $x^{\pm}$ are chosen as
critical points of $V$. We will be particularly interested
in choosing these critical points to be minima,
so that the problem \eqref{eq:SDE} describes
a chemical reaction; but, in the numerical computations,
we will also choose saddle points
in order to illustrate certain mathematical phenomena.

Escape from local minima occurs on timescales which
are exponentially long in $\eps^{-1}$, whilst transitions
themselves occur on scales which are logarithmic in
$\eps^{-1}$.  We focus interest on an intermediate
timescale between these two regimes, which is $\eps^{-1}$.
This timescale is hence long enough to
capture a single transition, but not long enough to
capture the typical waiting time in a potential minimum.
In particular, transition paths calculated in this
scaling which pass through an
intermediate minimum $x^0$ of $V$ will not necessarily
exhibit the ``typical'' behaviour
that would be exhibited if
the time to make the transition from $x^-$ to $x^+$
via $x^0$ were left free. The behaviour of the most
likely single transitions in this regime is captured
by minimizers of the Freidlin-Wentzell
action \cite{FreidWentz84,ERV02}.
Numerical methods to capture multiple
such transitions are the subject of active study,
and the reader should consult \cite{ERV05b}, and the
references therein, for details. From an applied
perspective, our work serves to highlight the
potential pitfalls of using the Onsager Machlup
approach to compute transition paths, and this point
is discussed in detail in \cite{ps09}. As mentioned
above, the work in this paper
gives a mathematical explanation
for the numerical results observed in \cite{ps09}.
The results may also be of independent interest
from the point of view of the calculus of variations.
In particular the results show how hetereoclinic orbits
in a certain Hamiltonian flow form the building block
for construction of the $\Gamma-$limit. This fact
may also be of interest in applications where attention
has focussed on heteroclinic orbits in the forward
or backward gradient flow found from \eqref{eq:SDE}
with $\epsilon=0$; these form particular instances of
heteroclinic orbits in the Hamiltonian flow, but
non-gradient connections are also possible, as we will
demonstrate.

To enforce the scaling of interest
we choose $T=\eps^{-1}$ and
rescale time as $t=\eps^{-1}s$, to obtain
\begin{align}
\label{eq:rescale}
\begin{split}
&\dd x=-\frac{1}{\varepsilon}\nabla V(x)ds+\sqrt{2}\,
\dd W,\\
&x(0)=x^-\quad{\mbox {and}}\quad x(1)=x^+.
\end{split}
\end{align}
In this scaling we are studying
transitions on a unit time-interval,
in which the systematic motion of the molecules is large
and the thermal noise is of order $1$.

The probability measure $\pi$ governing the stochastic boundary
value problem \eqref{eq:rescale} has density
with respect to the Brownian bridge measure
$\pi_0$ arising in the case $V \equiv 0.$ The density
relating the two measures is found from the Girsanov formula,
together with an integration by parts (use of It\^{o} formula)
and use of the boundary conditions on the path \cite{RV05,HSV07}, and is given by
\begin{equation}
\label{eq:rd}
\frac{d\pi}{d\pi_0}(x) \propto \exp\left(-\frac{1}{2\eps^2}\int_0^1 G(x;\eps)\dd t \right)
\end{equation}
where the {\em path potential} $G$ is (with
$|\cdot|$ denoting the Euclidean norm)
\begin{equation}
G(x;\eps)=\frac{1}{2}|\nabla V(x)|^2-\eps\Delta V(x).
\label{eq:G}
\end{equation}
Functions are infinite dimensional and there
is no Lebesgue measure in infinite dimensions.
Nonetheless, it is instructive to
think heuristically of the Brownian bridge obtained
from \eqref{eq:rescale} with $V \equiv 0$
as having a probability density
(with respect to Lebesgue measure) of the form \cite{ChHa}
\begin{equation}
\label{eq:pdfbb}
\exp\left(-\frac{1}{4}\int_0^1 \left|\frac{\dd x}{\dd s}\right|^2 \dd s \right).
\end{equation}
(This is the formal limit obtained from the probability
density function for a discretized Brownian bridge).
Let $\Ho\bigl((0,1)\bigr)$ denote
the subset of $H^1\bigl((0,1);\bbR^{N}\bigr)$
comprised of functions satisfying $x(0)=x^-$
and $x(1)=x^+$.  Then, combining \eqref{eq:rd}
and \eqref{eq:pdfbb}, we may think of the probability
density for $\pi$ as being proportional to
$\exp\bigl(-\frac{1}{2\eps}\Ib(x)\bigr)$ where the
Onsager-Machlup functional
$\Ib:\Ho\bigl((0,1)\bigr)\to \bbR$
is defined by
\begin{equation}
\label{eq:pdf}
\Ib(x):=\int_0^1 \Bigl(\frac{\eps}{2}
\left|\frac{\dd x}{\dd s}\right|^2 +
\frac{1}{\eps} G(x;\eps)\Bigr)\dd s.
\end{equation}

This intuitive definition of $\Ib(x),$ via
the logarithm
of the pathspace probability density function,
suggests that minimizers of $\Ib$ are related to
paths of maximal proability. This can be
formulated precisely as follows
\cite{durrbach,ikeda-watanabe-89}.
Let $B^{\delta}(z)$
denote a ball of radius $\delta$ in $C([0,1];\bbR^{N})$
centred on $z \in \Ho\bigl((0,1)\bigr).$ Then, for any
$z_1,z_2 \in \Ho\bigl((0,1)\bigr)$,
$$\lim_{\delta \to 0} \frac{\pi\bigl(B^{\delta}(z_1)\bigr)}{\pi\bigl(B^{\delta}(z_2)\bigr)}=\exp
\Bigl(\frac{1}{2\epsilon}\bigl(\Ib(z_2)-\Ib(z_1)\bigr)\Bigr).$$
Thus, for small ball radius $\delta$, the logarithm of the
ratio of the probabilities of the two balls is equal to
the difference in $\Ib$ evaluated at the ball centres.
For this reason we are interested in minimizers of $\Ib.$

The following notation will be useful. We define
\begin{align}
\label{eq:IJ}
\begin{split}
\Iob(x)&=\int_0^1 \Bigl(\frac{\eps}{2}
\left|\frac{\dd x}{\dd s}\right|^2 +\frac{1}{2\eps}|\nabla V(x)|^2\Bigr)\dd s,\\
J(x)&=\int_{-\infty}^{\infty} \Bigl(\frac{1}{2}
\left|\frac{\dd x}{\dd s}\right|^2 +\frac{1}{2}|\nabla V(x)|^2\Bigr)\dd s.
\end{split}
\end{align}
Then
\begin{equation}
\label{eq:Ib}
\Ib(x)=\Iob(x)-\int_0^1 \Delta V\bigl(x(s)\bigr)\dd s.
\end{equation}

Our goal in what follows is to demonstrate that
the $\Gamma-$limit of $\Ib$ is finite only when
evaluated on the set of BV functions supported on the
critical points of $V$. And, furthermore, that on
this set the value of the $\Gamma-$limit is
determined by (suitably rescaled) minima of $J$,
subject to end-point conditions, together with the
integral of $\Delta V(x).$ Thus we conclude this
section with some observations concerning
minima of $J$.

The Euler-Lagrange equations for the functional $J$ are
\begin{equation}
\frac{d^2x}{\dd s^2}-D^2 V(x)\nabla V(x)=0,
\label{eq:el}
\end{equation}
where $D^2V$ denotes the Hessian of $V$.
These equations are Hamiltonian and conserve the energy
$$E=\frac{1}{2}\left|\frac{\dd x}{\dd s}\right|^2 -\frac{1}{2}|\nabla V(x)|^2.$$
Thus heteroclinic orbits connecting critical points
of $V$ via the equation \eqref{eq:el}, for which
$E$ is necessarily zero, satisfy
\begin{equation}
\label{eq:par}
\left|\frac{\dd x}{\dd s}\right|=|\nabla V(x)|.
\end{equation}
Hence, if there is a heteroclinic orbit connecting
critical points of $V$ under either of the forward or
backward gradient flows
\begin{equation}
\label{eq:grad}
\frac{\dd x}{\dd s}=\pm \nabla V(x)
\end{equation}
then this will also determine a heteroclinic orbit in
the Hamiltonian system. However, the converse is
not necessarily the case: there are heteroclinic orbits
in the Hamiltomnian flow which are not heteroclinic
orbits in the gradient dynamics.

We say that a potential $V:\bbR^{N} \to \bbR$ is
{\em admissible} if $V \in C^3(\R^{N},\R)$ and
\begin{enumerate}
\item the set of critical points
$$\mathcal E = \{x \in \R^N \; | \; \nabla V(x)=0 \}$$
is finite;
\item the Hessian $D^2 V(x)$ has no
zero eigenvalues for every $x \in \mathcal E$;
\item the weak coercivity condition
\begin{align} \label{weak-coercivity}
\exists R>0 \text{ such that } \inf_{|x|>R} |\nabla V(x)| >0
\end{align}
is satisfied.
\end{enumerate}
Admissibility implies, in particular,
that all critical points for the
gradient flows \eqref{eq:grad} are hyperbolic.

For each pair $x^-,x^+ \in \mathcal E$ the set of
{\em transition paths} is defined as
$$X(x^-,x^+)=\left\{y \in BV(\R) \; \left| \; \lim_{t \to \pm \infty} y(t) = x^\pm \text{ and } \dot y \in L^2(\R) \right.\right\}.$$

\begin{lemma} \label{lem:lower}
For any $x \in X(x^-,x^+)$
\begin{equation}
\label{eq:lb2}
J(x) \ge \bigl|V\bigl(x^+\bigr)-V\bigl(x^{-}\bigr)\bigr|.
\end{equation}
Furthermore, if the infimum of $J$ over $X(x^-,x^+)$
is attained at $\xs \in X(x^-,x^+)$,
then $\xs \in C^2(\R)$ satisfies the Euler-Lagrange
equation \eqref{eq:el} and has the property
\begin{equation}
\label{eq:one}
J(\xs)=\int_{-\infty}^{\infty}\bigl| \nabla V\bigl(\xs(s)\bigr)
\bigr|^2\,\dd s.
\end{equation}
Assume that the potential is admissible.
If either $x^{-}$ or $x^{+}$ is a local minimum
or maximum and there
exists a heterclinic orbit $\xs$ connecting $x^-$ and $x^+$
under the Hamiltonian dynamics \eqref{eq:el}, then
$\xs$ solves \eqref{eq:grad} and
\begin{equation}
\label{eq:two}
J(\xs) = \bigl|V\bigl(x^+\bigr)-V\bigl(x^{-}\bigr)\bigr|.
\end{equation}
In particular $\xs$ is a minimizer of $J$ which attains the
lower bound the lower bound
\eqref{eq:lb2}.
\end{lemma}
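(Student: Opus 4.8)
The plan is to establish the three assertions in sequence. The inequality \eqref{eq:lb2} is elementary: for $x \in X(x^-,x^+)$ the integrand of $J$ satisfies $\tfrac12|\dot x|^2 + \tfrac12|\nabla V(x)|^2 \ge |\dot x|\,|\nabla V(x)| \ge \bigl|\langle \dot x,\nabla V(x)\rangle\bigr| = \bigl|\tfrac{\dd}{\dd s}V(x(s))\bigr|$, so that $J(x) \ge \int_{-\infty}^{\infty}\bigl|\tfrac{\dd}{\dd s}V(x(s))\bigr|\,\dd s \ge \bigl|\int_{-\infty}^{\infty}\tfrac{\dd}{\dd s}V(x(s))\,\dd s\bigr| = \bigl|V(x^+)-V(x^-)\bigr|$, the last step using $x(s)\to x^{\pm}$ and continuity of $V$.

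For the regularity claim and \eqref{eq:one}, suppose $\xs$ minimizes $J$ over $X(x^-,x^+)$. I would test with $\phi\in C_c^\infty(\R;\R^N)$ (which keeps $\xs+\tau\phi$ inside $X(x^-,x^+)$, since $\phi$ has compact support) and use $\tfrac{\dd}{\dd\tau}J(\xs+\tau\phi)\big|_{\tau=0}=0$ to obtain the weak form of \eqref{eq:el}. Since $\xs\in X(x^-,x^+)$ is bounded and, being in $H^1_{\mathrm{loc}}$, continuous, the right-hand side $D^2V(\xs)\nabla V(\xs)$ is bounded and continuous; hence $\ddot{\xs}$ is continuous and $\xs\in C^2(\R)$ solves \eqref{eq:el} classically. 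The energy $E(s)=\tfrac12|\dot{\xs}(s)|^2-\tfrac12|\nabla V(\xs(s))|^2$ is then constant. As $s\to\pm\infty$ we have $\nabla V(\xs(s))\to\nabla V(x^{\pm})=0$, and since $\ddot{\xs}(s)\to0$ while $\dot{\xs}\in L^2(\R)$, also $\dot{\xs}(s)\to0$; therefore $E\equiv0$, which is \eqref{eq:par}, and inserting $|\dot{\xs}|^2=|\nabla V(\xs)|^2$ into the definition of $J$ yields \eqref{eq:one}.

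For the admissible case, assume (the two endpoints being symmetric) that $x^-$ is a local minimum or maximum and that $\xs$ is a heteroclinic of \eqref{eq:el} with $\xs(s)\to x^-$ as $s\to-\infty$ and $\xs(s)\to x^+$ as $s\to+\infty$. The key computation is that $w:=\dot{\xs}-\nabla V(\xs)$ and $v:=\dot{\xs}+\nabla V(\xs)$ satisfy the \emph{linear} ODEs $\dot w=-D^2V(\xs(s))\,w$ and $\dot v=D^2V(\xs(s))\,v$ (use \eqref{eq:el} together with $\tfrac{\dd}{\dd s}\nabla V(\xs)=D^2V(\xs)\dot{\xs}$); hence each is identically zero once it vanishes at a single point. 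A solution of \eqref{eq:el} converging to a critical point approaches it exponentially — at $x^-$ the linearisation of \eqref{eq:el} is $\ddot\xi=\bigl(D^2V(x^-)\bigr)^2\xi$, whose characteristic exponents are nonzero by admissibility — so $\dot{\xs}(s)\to0$ and $\nabla V(\xs(s))\to0$, i.e.\ $w(s),v(s)\to0$, as $s\to-\infty$. Because $x^-$ is a minimum or maximum, $D^2V(x^-)$ is definite, and $D^2V(\xs(s))$ inherits the same definiteness for all sufficiently negative $s$; a Gr\"onwall estimate on $\tfrac{\dd}{\dd s}|w|^2=-2\langle w,D^2V(\xs(s))w\rangle$ (if $D^2V(x^-)>0$), respectively on $\tfrac{\dd}{\dd s}|v|^2=2\langle v,D^2V(\xs(s))v\rangle$ (if $D^2V(x^-)<0$), then forces $|w(s)|$, respectively $|v(s)|$, to grow without bound as $s\to-\infty$ unless it is identically zero. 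Hence $\xs$ solves one of the gradient flows \eqref{eq:grad}, say $\dot{\xs}=\sigma\nabla V(\xs)$ with $\sigma\in\{\pm1\}$; then $|\dot{\xs}|=|\nabla V(\xs)|$, so $J(\xs)=\int_{-\infty}^{\infty}|\nabla V(\xs)|^2\,\dd s$, while $\tfrac{\dd}{\dd s}V(\xs)=\sigma|\nabla V(\xs)|^2$ integrates to $V(x^+)-V(x^-)=\sigma\int_{-\infty}^{\infty}|\nabla V(\xs)|^2\,\dd s=\sigma J(\xs)$; since $J(\xs)\ge0$ this gives $J(\xs)=|V(x^+)-V(x^-)|$, i.e.\ \eqref{eq:two}, and comparison with \eqref{eq:lb2} shows $\xs$ is a minimizer attaining the lower bound. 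If instead $x^+$ is the distinguished minimum/maximum, the identical reasoning is run as $s\to+\infty$.

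The main obstacle is the third step, and within it the precise point where sign-definiteness of $D^2V$ at the min/max endpoint is used: this is exactly what excludes a saddle, at which $\{\dot x=-\nabla V(x)\}$ is an invariant manifold of \eqref{eq:el} that is neither stable nor unstable, so Hamiltonian heteroclinics through a saddle need not be gradient connections. Care is needed to ensure $D^2V(\xs(s))$ is definite near the endpoint before the growth estimate is applied — this follows from continuity of $D^2V$ and admissibility — and to justify $\dot{\xs}(s)\to0$; the cleanest route to the latter is the standard fact that a trajectory converging to a hyperbolic rest point of \eqref{eq:el} lies on its stable or unstable manifold and hence converges exponentially together with its derivative.
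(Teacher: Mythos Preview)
Your proof is correct. The first two parts coincide with the paper's argument (you supply more detail on regularity and on why the conserved energy vanishes, but the content is the same).

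The third part takes a genuinely different route. The paper argues geometrically: taking $x^+$ to be a local minimum, it observes that the graph $\mathcal M=\{(x,-\nabla V(x)):x\in\R^N\}\subset\R^{2N}$ is an $N$-dimensional manifold invariant under the Hamiltonian flow \eqref{eq:el} (since solutions of $\dot x=-\nabla V(x)$ automatically solve \eqref{eq:el}); because $D^2V(x^+)>0$, every point of $\mathcal M$ near $(x^+,0)$ flows forward to $(x^+,0)$. The stable manifold of $(x^+,0)$ under \eqref{eq:el} also has dimension $N$, so the two $N$-dimensional invariant manifolds agree near $(x^+,0)$, and by invariance of $\mathcal M$ the entire stable manifold lies inside $\mathcal M$. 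Any heteroclinic converging to $x^+$ therefore satisfies $\dot{\xs}=-\nabla V(\xs)$, and \eqref{eq:two} follows by integrating $\tfrac{\dd}{\dd s}V(\xs)$. Your factorisation $w=\dot{\xs}-\nabla V(\xs)$, $v=\dot{\xs}+\nabla V(\xs)$, together with the first-order linear equations $\dot w=-D^2V(\xs)w$ and $\dot v=D^2V(\xs)v$, is more elementary: a scalar Gr\"onwall estimate near the definite endpoint replaces the dimension-counting, and stable-manifold theory enters only to justify $\dot{\xs}\to 0$. The paper's version, in return, makes the geometry explicit---at a local extremum the gradient graph \emph{is} the stable (or unstable) manifold of the second-order flow---and thereby makes visible why the conclusion can fail when both endpoints are saddles: there $\mathcal M$ remains invariant but coincides with neither the stable nor the unstable manifold, so Hamiltonian connections need not be gradient.
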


\begin{proof}
Clearly
\begin{align*}
J(x) &\ge \int_{-\infty}^{\infty} \Bigl|\frac{\dd x}{\dd s}\Bigr|
\Bigl|\nabla V(x)\Bigr|\,\dd s\\
& \ge \int_{-\infty}^{\infty}  \Bigl|\frac{\dd}{\dd s}
\bigl(V(x)\bigr)\Bigr|\,\dd s\\
& \ge \Bigl|\int_{-\infty}^{\infty}  \frac{\dd}{\dd s}
\bigl(V(x)\bigr)\,\dd s\Bigr|.
\end{align*}
Integrating and using the end point conditions gives
the first result.
Standard regularity results imply that minimizers $x$ of $J$
are $C^2(\R)$ and satisfy the Euler-Lagrange
equation~\eqref{eq:el}.
Equation~\eqref{eq:one} then follows from \eqref{eq:par}.

To prove the final result we first note that
the linearization of equation \eqref{eq:el}
at a critical point $\xt\in \mathcal E$ is given by
\begin{equation}
\frac{\dd^2y}{\dd s^2}-D^2 V(\xt)^2 y=0,
\label{eq:el2}
\end{equation}
whilst the forward gradient flow, given
by \eqref{eq:grad} with a minus sign,
has linearization
\begin{equation}
\frac{\dd y}{\dd s}+D^2 V(\xt) y=0.
\label{eq:el3}
\end{equation}
If $V$ is admissible, and $\xt$ is a critical point of $V$, then
$(0,\xt)$ is hyperbolic for the Hamiltonian
flow and, from \eqref{eq:el2}, has stable and unstable
manifolds both of dimension $N.$

We assume next that $x^+$ is a local minimum of $V$.
Let $\mathcal M$ be the 
$N$-dimensional manifold defined by 
$$\mathcal M =\{ (x, -D V(x)) \; | \; x \in \R^N \} \subset \R^{2N}.$$ 
Differentiating equation \eqref{eq:grad}
with respect to $t$ shows that $\mathcal M$ is invariant under
the flow of equation \eqref{eq:el}. Since $D^2 V(x^+)$ is positive 
definite there exists $\veps>0$ such that all solutions $y(t)$ of 
equation \eqref{eq:el} with the properties $|y(0) - x^+|<\veps$
and $\dot y(0))= - DV(y(0))$  converge to $x^+$ as $t$
tends to infinity. 
Define next the stable manifold
$$ \mathcal M(x^+) = \left\{ (x,v) \in \R^{2N}\; \left| \;
\lim_{t\to \infty} y(t) = x^+\right.\right\},$$
where $y$ solves~\eqref{eq:el} and satisfies the initial condition
$y(0) = x$, $\dot y(0) = v$.
Since the dimension of $\mathcal M(x^+)$ is $N$ the sets 
$\mathcal M(x^+) \cap \{|x-x^+|<\veps\}$ and $\mathcal M \cap 
\{|x-x_+|<\veps\}$ coincide. Thanks to the invariance of $\mathcal M$ 
it follows that $\mathcal M(x^+) \subset \mathcal M$.
As a result one obtains that
\begin{align*}
J(\xs) = -\int_{-\infty}^{\infty} D V(\xs(t)) \dot \xs=
V(x^-) - V(x^+) \geq 0,
\end{align*}
where the last inequality holds because \eqref{eq:grad} implies that
$V$ is a decreasing Lyapunov function along the
trajectory $\xs$. Thus, the claim has been proven under the assumtion
that $x^+$ is a local minimum. The remaining three cases ($x^+$ is a
local maximum, $x^-$ is a local minimum/maximum) can be dealt with in
an analogous fashion.
\end{proof}

The result shows that only saddle-saddle connections can give rise to
heteroclinic orbits in the Hamiltonian system \eqref{eq:el}
which cannot be found by in one of the gradient flows \eqref{eq:grad}.
As a closing remark in this section we observe
that the existence of heteroclinic orbit in \eqref{eq:el}
is a generic property of non-degenerate systems.
For the Hamiltonian flow the effective dimension of
the space is $2N-1,$ because it is constrained to a level
set of the Hamiltonian. Since the stable and unstable
manifolds for the Hamiltonian system are both of dimension $N$
(cf eq. \eqref{eq:el2}) we expect that, generically,
there will be a $1$ dimensional manifold connecting
any pair of equilibria under the Hamiltonian flow.

\section{The $\Gamma-$Limit}
\label{sec:gamma}

We now determine the $\Gamma-$limit
for the Onsager-Machlup function
\eqref{eq:pdf}. A thorough introduction to $\Gamma$-convergence can
be found in \cite{dal_maso} and \cite{Braid02}.
Problems closely related to ours are studied
using $\Gamma$-convergence in \cite{ks89,sb90}.

We start with some basic
definitions and propositions which
serve to explain the form of the
$\Gamma-$limit. The key result is
contained in Proposition \ref{transitionstates}
which demonstrates that every transition is achieved
via a {\em finite} collection of intermediate
transitions.
The limit Theorem \ref{t:gamlim}
is then stated and proved, building on
a number of lemmas which follow it.

Recall the definition of admissible potential $V$, as
well as that of a transition path.

\begin{proposition}\label{gamlim.prop}
Let $V$ be admissible and define for each pair
$x^-,x^+ \in \mathcal E$ the function $\Phi(x^-,x^+)$ by
$$ \Phi(x^-,x^+) = \inf\left\{J(y) \; \left| \; y \in X(x^-,x^+)\right.\right\}.$$
Then there exists $c>0$ such that $\Phi(x^-,x^+)>c$ for all
pairs $x^{\pm} \in \mathcal E$ with $x^-\neq x^+.$
\end{proposition}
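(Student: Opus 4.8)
The plan is to reduce the uniform bound to a bound for each individual pair. Since $V$ is admissible, $\mathcal E$ is finite, so there are only finitely many pairs $x^-\neq x^+$ in $\mathcal E$; it therefore suffices to show $\Phi(x^-,x^+)>0$ for each fixed such pair and then take $c$ to be, say, half the minimum of these finitely many positive numbers. When $V(x^-)\neq V(x^+)$ the pairwise bound is immediate from Lemma~\ref{lem:lower}, since \eqref{eq:lb2} gives $\Phi(x^-,x^+)\ge |V(x^+)-V(x^-)|>0$; the real content is therefore the case of distinct critical points at the same potential value, and I would give one argument covering all pairs simultaneously.

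The key idea is to localise near $x^-$ in a thin annulus. Because $\mathcal E$ is finite, $\dist(x^-,\mathcal E\setminus\{x^-\})>0$, so I can fix $\rho>0$ with $2\rho<|x^+-x^-|$ and $2\rho<\dist(x^-,\mathcal E\setminus\{x^-\})$. Then the closed annulus $A=\{x\in\R^N:\rho\le |x-x^-|\le 2\rho\}$ is compact and, by this choice of $\rho$, contains no point of $\mathcal E$; since $\nabla V$ is continuous, $m:=\min_{x\in A}|\nabla V(x)|>0$. Working in a thin annulus about $x^-$ — rather than, for instance, the complement of small balls about all of $\mathcal E$ — is precisely what makes this lower bound on $|\nabla V|$ cost nothing: no control of the geometry near the other critical points, and no appeal to the coercivity hypothesis~\eqref{weak-coercivity}, is needed.

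Now take any $y\in X(x^-,x^+)$, which we may assume has $J(y)<\infty$. Since $\dot y\in L^2(\R)$, $y$ is absolutely continuous, and $y(t)\to x^\pm$ as $t\to\pm\infty$ with $x^+\notin\overline{B_{2\rho}(x^-)}$; hence $y(t)\in B_\rho(x^-)$ for $t\ll 0$ and $y(t)\notin B_{2\rho}(x^-)$ for $t\gg 0$. I would extract a crossing interval by setting
\[
s_2=\inf\{t\in\R:|y(t)-x^-|\ge 2\rho\},\qquad s_1=\sup\{t<s_2:|y(t)-x^-|\le\rho\},
\]
so that $-\infty<s_1<s_2<\infty$, $|y(s_1)-x^-|=\rho$, $|y(s_2)-x^-|=2\rho$, and $y(t)\in A$ for all $t\in[s_1,s_2]$. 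Put $\tau=s_2-s_1$. On $[s_1,s_2]$ we have $|\nabla V(y)|\ge m$, so $\int_{s_1}^{s_2}\tfrac12|\nabla V(y)|^2\,\dd s\ge\tfrac12 m^2\tau$; and $\int_{s_1}^{s_2}|\dot y|\,\dd s\ge|y(s_2)-y(s_1)|\ge 2\rho-\rho=\rho$, so Cauchy--Schwarz gives $\int_{s_1}^{s_2}\tfrac12|\dot y|^2\,\dd s\ge\rho^2/(2\tau)$. Adding these and minimising $\tfrac12 m^2\tau+\rho^2/(2\tau)$ over $\tau>0$ (the minimum is $\rho m$, attained at $\tau=\rho/m$) yields $J(y)\ge\rho m$, whence $\Phi(x^-,x^+)\ge\rho m>0$.

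I expect the one genuinely delicate point to be the construction and verification of the crossing interval: one must combine the $t\to\pm\infty$ limits of $y$ with continuity to be sure the path actually traverses $A$ — that $s_1<s_2$, that the endpoint radii are exactly $\rho$ and $2\rho$, and that $y$ stays in $A$ on $[s_1,s_2]$ — rather than merely touching $\partial A$. Everything else is routine: the compactness bound on $|\nabla V|$, the Cauchy--Schwarz step, the elementary one-variable minimisation, and the final passage to a uniform constant, which uses only the finiteness of $\mathcal E$.
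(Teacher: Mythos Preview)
Your argument is correct. It differs from the paper's proof in where the localisation happens. The paper first proves Lemma~\ref{bv.lem}, a quadratic lower bound $J(y)\ge \tfrac{1}{4C}\veps^2$ obtained by working \emph{inside} a small ball around the endpoint $x^+$ (or $x^-$), using the Hessian nondegeneracy to get $|\nabla V(y)|\ge \tfrac{1}{\sqrt C}|y-x^+|$ there; Proposition~\ref{gamlim.prop} then follows by translating a minimising sequence so that $y_l(0)$ sits at distance at least $r/2$ from $\mathcal E$ and invoking that lemma. You instead localise in an annulus \emph{away} from $x^-$, where $|\nabla V|$ is bounded below simply by compactness and the absence of critical points, and combine the kinetic and potential contributions via the elementary AM--GM/Cauchy--Schwarz trade-off $\tfrac{\rho^2}{2\tau}+\tfrac{m^2\tau}{2}\ge \rho m$. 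Your route is more self-contained for this proposition and uses less of the admissibility hypothesis (no appeal to $\det D^2V\neq 0$ or to~\eqref{weak-coercivity}); the paper's route has the advantage that Lemma~\ref{bv.lem} and the accompanying estimate~\eqref{DVbd.eq} are reused later, in the tightness and exponential-decay arguments of Lemma~\ref{exist}.
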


We can establish a direct representation of the transition energy $\Phi$
which avoids the usage of infima.

\begin{proposition} \label{transitionstates}
Let $V$ be admissible and $x^\pm \in \mathcal E$ be two critical points of $V$. Then there exists a finite
sequence $\{x_i\}_{i=0}^{k} \in \mathcal E$ such that
$x_0=x^-$, $x_k= x^+$ and
$$ \Phi(x^-,x^+) = \sum_{i=1}^k \min \left\{J(y) \; \left| \; y \in X(x_{i-1},
x_i) \right.\right\},$$
\end{proposition}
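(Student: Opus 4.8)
The plan is to convert the infimum defining $\Phi$ into a finite sum of \emph{attained} minimal transition energies by a concentration--compactness argument with three ingredients: subadditivity of $\Phi$, a ``split or attain'' dichotomy applied to a single minimizing sequence, and a finiteness argument powered by Proposition~\ref{gamlim.prop}. (If $x^-=x^+$ take $k=0$; assume henceforth $x^-\neq x^+$.) First I would prove \textbf{subadditivity}: $\Phi(x^-,x^+)\le\Phi(x^-,z)+\Phi(z,x^+)$ for every $z\in\mathcal E$. Given near-optimal $y\in X(x^-,z)$ and $\tilde y\in X(z,x^+)$, for large $T$ both $y(T)$ and $\tilde y(-T)$ lie within $\delta(T)\to0$ of $z$; I would concatenate $y|_{(-\infty,T]}$, a length-one linear bridge from $y(T)$ to $\tilde y(-T)$, and a suitable translate of $\tilde y|_{[-T,\infty)}$, obtaining an element of $X(x^-,x^+)$ whose $J$-value exceeds $J(y)+J(\tilde y)$ by only $O(\delta(T)^2)$, since $|\nabla V(x)|\le C|x-z|$ near $z$. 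Sending the two paths to optimality and $T\to\infty$ gives subadditivity; iterating it yields $\Phi(x^-,x^+)\le\sum_{i=1}^{k}\Phi(x_{i-1},x_i)$ for any intermediate sequence $x_0=x^-,\dots,x_k=x^+$ in $\mathcal E$. (Also $\Phi(e,e)=0$, from the constant path.)

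For the \textbf{dichotomy} I would fix $\rho>0$ so small that the balls $B_\rho(e)$, $e\in\mathcal E$, are pairwise disjoint and none contains a second critical point; admissibility (finiteness of $\mathcal E$, nondegeneracy of the Hessians) together with \eqref{weak-coercivity} then supplies constants with $|\nabla V|\ge\eta>0$ off $\bigcup_e B_\rho(e)$ and $|\nabla V(x)|\ge c_0|x-e|$ on $B_\rho(e)$. Take a minimizing sequence $y_n\in X(x^-,x^+)$, $J(y_n)\to\Phi(x^-,x^+)$, and translate in $s$ so the first exit time of $y_n$ from $B_\rho(x^-)$ is $s=0$. The uniform bound on $\|\dot y_n\|_{L^2(\R)}$ makes the $y_n$ uniformly $\tfrac12$-H\"older; combined with \eqref{weak-coercivity} (the measure of $\{|y_n|>R\}$ is bounded by $2J(y_n)/\bigl(\inf_{|x|>R}|\nabla V(x)|\bigr)^2$) this yields a uniform $L^\infty$ bound, so after passing to a subsequence $y_n\to y$ locally uniformly and $\dot y_n\rightharpoonup\dot y$ in $L^2(\R)$. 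Weak lower semicontinuity gives $J(y)\le\Phi(x^-,x^+)<\infty$, hence $\dot y,\nabla V(y)\in L^2(\R)$; since $V\in C^3$ the derivative of $s\mapsto|\nabla V(y(s))|^2$ is integrable, so $\nabla V(y(s))\to0$ as $s\to\pm\infty$, and since the balls $B_\rho(e)$ separate the discrete set $\mathcal E$ one gets $y(s)\to a^\pm\in\mathcal E$. The normalisation forces $y(s)\in\overline{B_\rho(x^-)}$ for $s\le0$ and $|y(0)-x^-|=\rho$, hence $a^-=x^-$ and $y$ is nonconstant.

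Now split $J(y_n)=\int_{-\infty}^{-T}+\int_{-T}^{T}+\int_{T}^{\infty}$ of the nonnegative Lagrangian, discard the left tail, apply weak lower semicontinuity on $[-T,T]$, and close the right tail with a cheap bridge from $a^+$ to $y_n(T)$ (exactly as in the subadditivity step); letting $n\to\infty$ and then $T\to\infty$ gives $\Phi(x^-,x^+)\ge J(y)+\Phi(a^+,x^+)$. Since $y\in X(x^-,a^+)$ we have $J(y)\ge\Phi(x^-,a^+)$, so with subadditivity all these become equalities; in particular $J(y)=\Phi(x^-,a^+)$ and $\Phi(x^-,x^+)=\Phi(x^-,a^+)+\Phi(a^+,x^+)$. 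If $a^+=x^+$ then $J(y)=\Phi(x^-,x^+)$, i.e.\ the infimum is attained by $y$; otherwise $a^+\neq x^+$, and also $a^+\neq x^-$ (else $J(y)=\Phi(x^-,x^-)=0$, contradicting nonconstancy), so we have a genuine split $\Phi(x^-,x^+)=\Phi(x^-,a^+)+\Phi(a^+,x^+)$ with $a^+\in\mathcal E\setminus\{x^-,x^+\}$.

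Finally I would \textbf{terminate} by applying this dichotomy recursively to each segment whose infimum is not yet attained. At each stage one has an ordered list $x^-=x_0,\dots,x_m=x^+$ in $\mathcal E$ with consecutive entries distinct and the exact identity $\Phi(x^-,x^+)=\sum_{l=1}^{m}\Phi(x_{l-1},x_l)$. If $x_i=x_j$ for some $i<j$, iterated subadditivity across the repeat gives $\Phi(x^-,x^+)\le\sum_{l\le i}\Phi(x_{l-1},x_l)+\sum_{l>j}\Phi(x_{l-1},x_l)$, which compared with the exact identity forces $\sum_{i<l\le j}\Phi(x_{l-1},x_l)\le0$; but each term there exceeds the constant $c>0$ of Proposition~\ref{gamlim.prop}, a contradiction. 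Hence the $x_i$ are distinct, $m\le|\mathcal E|-1$, and since each split strictly increases $m$ the recursion stops after finitely many steps with every segment having an attained minimum --- which is the assertion. The \textbf{main obstacle}, I expect, is the dichotomy: getting the uniform $L^\infty$ bound on the minimizing sequence from the energy bound and \eqref{weak-coercivity}, checking that the weak limit really connects two critical points, and, in the noncompact case, controlling the escaped energy via the tail estimate so that $\Phi(x^-,x^+)\ge J(y)+\Phi(a^+,x^+)$ holds with no deficit.
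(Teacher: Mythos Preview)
Your argument is correct and complete in outline; it follows a genuinely different route from the paper's, so a brief comparison is in order.

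The paper applies Lions' concentration--compactness lemma to the density $\rho_l(t)=\tfrac12(|\dot y_l|^2+|DV(y_l)|^2)$ of a minimizing sequence, rules out vanishing by a direct lower bound on the ``final transition'', and in the splitting case locates an intermediate $x\in\mathcal E$ between the two bumps to obtain $\Phi(x^-,x^+)\ge\Phi(x^-,x)+\Phi(x,x^+)$. Iterating, the number $k$ of splits is bounded by $\Phi(x^-,x^+)/c$ via Proposition~\ref{gamlim.prop}; attainment on each final segment is then supplied by a separate lemma (tight minimizing sequence $\Rightarrow$ minimizer, with $\dot y\in L^1$ proved by a Gronwall/exponential-decay argument). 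Your approach replaces the abstract trichotomy by a single ``first-bump'' extraction: normalising at the first exit from $B_\rho(x^-)$ forces a nontrivial weak limit $y$, and your tail-plus-bridge estimate yields $\Phi(x^-,x^+)\ge J(y)+\Phi(a^+,x^+)$ directly, so that attainment on the leading segment ($J(y)=\Phi(x^-,a^+)$) comes for free rather than from a separate lemma. Your termination via injectivity of the $x_i$ (hence $k\le|\mathcal E|-1$) is sharper than the paper's energy bound and is in fact the ``straight-forward refinement'' the paper alludes to after the proposition. What the paper's route buys is modularity (Lions' lemma is quotable) and a clean separation of the attainment question; what yours buys is economy, since vanishing, compactness and attainment are handled in one stroke.

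One small point you should make explicit: to write $\min\{J(y):y\in X(x_{i-1},x_i)\}$ you need your limiting profile $y$ to lie in $X$, i.e.\ $\dot y\in L^1(\R)$, and your sketch only delivers $\dot y\in L^2(\R)$. This is exactly what the paper's Gronwall argument supplies; in your framework, once $J(y)=\Phi(x^-,a^+)$ standard regularity makes $y\in C^2$ a solution of the Euler--Lagrange equation, and admissibility (hyperbolicity of the critical points) then gives exponential convergence at both ends, hence $\dot y\in L^1$ and $y\in X$. With that line added, your proof is complete.
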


A straight-forward refinement of the analysis shows that
the sequence $x_i$ is injective.

\begin{definition} Let $X$ be a Banach-space, $\veps>0$ a parameter and
$I_\veps:X\to\R$ a family of functionals. The functional $I_0:X \to \R\cup \{+\infty\}$ is the $\Gamma-$limit of $I_\veps$ as
$\veps\to 0$ if for all $x\in X$ and all sequences $x_\veps \in X$ which converge weak-* to $x$ as $\veps$ tends to
0 the liminf-inequality
\begin{align*}
\liminf_{\veps \to 0} I_\veps(x_\veps) \geq I_0(x)
\end{align*}
holds, and for all $z \in X$,
there exists a recovery sequence $z_\veps \in X$ which converges weak-* to $z$ and satisfies
\begin{align*}
\limsup_{\veps \to 0} I_\veps(z_\veps) \leq I_0(z).
\end{align*}
\end{definition}

\begin{theorem}\label{t:gamlim}
Let $V$ be an admissible potential.  Then the
$\Gamma$-limit of the functional $I_\veps$ as $\veps$ tends to 0
is
$$ I_0(x) = \left\{\begin{array}{rl} \sum\limits_{\tau \in \mathcal D}
\Phi(x^-(\tau), x^+(\tau)) -
\int_0^1 \Delta V(x(s))
\, \dd s & \text{ if } x\in BV([0,1])\\
& \text{ and }
x(s) \in \mathcal E \text{ a.e. } s\in [0,1],\\[1em]
+ \infty & \text{ else,}
 \end{array} \right.$$
where $\mathcal D(x)$ is the set of discontinuity points of $x$
and $x^\pm(\tau)$ are the left and right-sided limits
of $x$ at $\tau$.
\end{theorem}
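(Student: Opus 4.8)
The plan is to prove the two halves of the $\Gamma$-convergence definition separately, establishing first the liminf inequality and then the existence of a recovery sequence, exploiting throughout the decomposition \eqref{eq:Ib} of $I_\veps$ into $J_\veps$ plus the $\Delta V$ term. Since $\int_0^1 \Delta V(x(s))\,\dd s$ is (by admissibility, hence boundedness of $D^2V$ on the relevant sets) a continuous perturbation under weak-* convergence in $BV$, the entire task reduces to analyzing the behaviour of $J_\veps$. The target functional for that piece is $\sum_{\tau\in\mathcal D(x)}\Phi(x^-(\tau),x^+(\tau))$ on $BV$-paths valued a.e. in $\mathcal E$, and $+\infty$ otherwise.

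For the liminf inequality I would take a sequence $x_\veps \to x$ weak-* in $BV$ with $\liminf_\veps I_\veps(x_\veps)<\infty$ (else nothing to prove) and first argue that the limit $x$ must lie in $BV([0,1])$ with $x(s)\in\mathcal E$ a.e. Here the bound on $I_\veps$ forces $\frac{1}{2\veps}\int_0^1|\nabla V(x_\veps)|^2\,\dd s$ to stay bounded, so $\int_0^1|\nabla V(x_\veps)|^2\,\dd s\to 0$; combined with the weak coercivity \eqref{weak-coercivity} this pins $x_\veps$ (hence $x$) into a bounded region and forces $\nabla V(x)=0$ a.e., i.e. $x(s)\in\mathcal E$ a.e. Since $\mathcal E$ is finite and $x$ has bounded variation, $x$ is a step function with finitely many jumps. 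Now near each jump point $\tau$, a blow-up/rescaling argument is needed: on a shrinking window around $\tau$, change variables $s = \tau + \veps\sigma$ (or the appropriate scaling making $\frac{\veps}{2}|\dot x|^2$ and $\frac{1}{2\veps}|\nabla V|^2$ balance), so that the contribution of $J_\veps$ on that window is bounded below by $\inf\{J(y):y\in X(x^-(\tau),x^+(\tau))\}=\Phi(x^-(\tau),x^+(\tau))$ in the limit, using lower semicontinuity of the (unscaled) $J$ on the line and Proposition \ref{transitionstates} to know this infimum is a genuine minimum achieved by a finite chain. Summing over the finitely many jumps (disjoint windows) gives $\liminf J_\veps(x_\veps)\ge \sum_{\tau}\Phi(x^-(\tau),x^+(\tau))$, and adding back the continuous $\Delta V$ term closes this half.

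For the recovery sequence, given a target step function $z$ valued in $\mathcal E$ with jump set $\mathcal D(z)=\{\tau_1,\dots,\tau_m\}$, I would build $z_\veps$ by gluing: away from the jump points set $z_\veps \equiv z$ (a constant critical point on each interval, so $\dot z_\veps = 0$ and $\nabla V(z_\veps)=0$ there), and in an $O(\veps)$-neighbourhood of each $\tau_j$ insert a rescaled copy of the (finite concatenation of) minimizing heteroclinic orbit(s) realizing $\Phi(z^-(\tau_j),z^+(\tau_j))$ from Proposition \ref{transitionstates}, truncated to a compact window and interpolated to match the constant values at the window endpoints. The heteroclinic lives on all of $\R$ and decays exponentially to its endpoints (hyperbolicity from admissibility), so truncating at distance $O(\log(1/\veps))$ in the rescaled variable — i.e. window width $O(\veps\log(1/\veps))$ in $s$ — makes the interpolation error negligible: the truncation contributes $o(1)$ to $J_\veps$ and the windows remain disjoint for $\veps$ small. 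Then $J_\veps(z_\veps)\to\sum_j \Phi(z^-(\tau_j),z^+(\tau_j))$, $z_\veps\to z$ weak-* in $BV$ (the inserted pieces have uniformly bounded variation equal to the jump sizes in the limit), and the $\Delta V$ term converges by dominated convergence, giving $\limsup I_\veps(z_\veps)\le I_0(z)$.

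The main obstacle is the lower bound near the jumps: one must show that no mass of $J_\veps$ "escapes" — that a sequence with bounded energy cannot make a transition between two critical points using asymptotically less than $\Phi$, and in particular cannot route through additional critical points for "free." This requires the uniform lower bound $\Phi(x^-,x^+)>c>0$ of Proposition \ref{gamlim.prop} to control the number of intermediate visits, together with a careful localization so that the rescaled pieces over disjoint windows do not interact and the leftover "near-constant" portions of $x_\veps$ contribute nonnegatively. Making the blow-up rigorous — choosing the windows, extracting a convergent rescaled subsequence on each, and invoking lower semicontinuity of $J$ on $\R$ while handling the moving endpoints — is the technical heart of the argument, and is presumably where the cited analyses \cite{ks89,sb90} are leaned on.
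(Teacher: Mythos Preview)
Your proposal is correct and follows essentially the same route as the paper: separate off the $\Delta V$ term via Helly and dominated convergence, show $I_0=+\infty$ when $x\notin\mathcal E$ a.e.\ by the $\frac{1}{\veps}|\nabla V|^2$ blow-up, prove the liminf by rescaling $s\mapsto s/\veps$ on disjoint windows around the (finitely many) jumps and modifying the rescaled pieces into elements of $X(x^-(\tau),x^+(\tau))$ so that $J(\cdot)\ge\Phi$, and build the recovery sequence by gluing rescaled near-optimal transition profiles at each jump. The only cosmetic difference is that for the recovery sequence the paper uses, for each $\delta>0$, a near-minimizer $y^\tau_\delta$ with compactly supported derivative (so no truncation argument is needed), whereas you invoke the exponential decay of the actual minimizer from Proposition~\ref{transitionstates}; both work and yield the same estimate.
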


Proposition \ref{transitionstates}
shows that the infimum $\Phi$ can be written
as a finite sum of minima. These minima are obtained
from evaluation of \eqref{eq:one}, where $\xs$ solves
\eqref{eq:el} subject to $\xs(t) \to x_{i-1}/x_{i}$ as
$t \to \pm \infty.$ Furthermore, if either $x{i-1}$ or $x_i$ is a local
extremum of $V$, then $\xs$ is also
a heteroclinic orbit in one of the gradient flows
\eqref{eq:grad}, then the minimum is given by \eqref{eq:two}.
Thus Theorem \ref{t:gamlim} shows that the
$\Gamma-$limit of $I_{\veps}$ can be computed through knowledge of the
critical points of $V$ and the set of (Hamiltonian or
gradient) heteroclinic orbits connecting them.

We start the proof of Theorem \ref{t:gamlim}
by proving a lemma which delivers a lower bound
for the amount of energy needed to reach one of the stationary points $x \in \mathcal E$ when
starting nearby. We will often write $x^\pm$ as a shorthand
for $\{x^-,x^+\}$.
\begin{lemma} \label{bv.lem}
Let $V$ be an admissible potential. There exists numbers $C, \veps_0>0$
such that for and all $\veps \in [0,\veps_0]$, $x^{\pm} \in \mathcal E$
and all all paths $y \in X(x^-,x^+)$ with  $\dist(y(0),x^{\pm}) \geq \veps$ the
estimate
\begin{align*}
J(y)\geq \frac{1}{2C} \veps^2
\end{align*}
holds.
\end{lemma}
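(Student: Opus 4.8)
The plan is to read off the estimate from the restriction of $y$ to $(-\infty,0]$ alone; the endpoint $x^+$ never enters. First I would record a uniform consequence of admissibility: because $\mathcal E$ is finite and every Hessian $D^2V(\xt)$, $\xt\in\mathcal E$, is invertible, a first-order Taylor expansion of $\nabla V$ at each $\xt\in\mathcal E$, together with the bound $|D^2V(\xt)w|\ge\mu_\xt|w|$ coming from invertibility, produces constants $\lambda>0$ and $r_0>0$, depending only on $V$, with
\[ |\nabla V(x)|\ \ge\ \lambda\,|x-\xt|\qquad\text{whenever }\xt\in\mathcal E\text{ and }|x-\xt|\le r_0 . \]
I then set $\veps_0:=r_0$ and $C:=2/\lambda$, so that $\tfrac{1}{2C}=\tfrac{\lambda}{4}$; the case $\veps=0$ is trivial since $J\ge0$, so assume $\veps\in(0,\veps_0]$.

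Fix $y\in X(x^-,x^+)$ with $\dist\bigl(y(0),x^{\pm}\bigr)\ge\veps$, so that $|y(0)-x^-|\ge\veps$ while $y(t)\to x^-$ as $t\to-\infty$. I would split into two cases and in each define a target radius $\beta$: in Case~1, $|y(t)-x^-|<r_0$ for every $t\le0$, and I put $\beta:=\veps$; in Case~2, $|y(t^\ast)-x^-|\ge r_0$ for some $t^\ast\le0$, and I put $\beta:=r_0$. In both cases, on $(-\infty,0]$ the path leaves $x^-$ and reaches distance $\beta$ from $x^-$ without exiting $\overline{B_{r_0}(x^-)}$ in the process. Precisely: let $T_+\le0$ be the first time at which $|y(T_+)-x^-|=\beta$ (it exists by the intermediate value theorem --- in Case~1 because $|y(0)-x^-|\ge\veps=\beta$, in Case~2 because the trajectory exceeds distance $r_0$), and let $T_-<T_+$ be the last time before $T_+$ at which $|y(T_-)-x^-|=\beta/2$ (it exists by continuity of $y$ and $y(t)\to x^-$). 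Then $\beta/2\le|y(t)-x^-|\le r_0$ on $[T_-,T_+]$, whence $|\nabla V(y(t))|\ge\lambda\beta/2$ there.

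Restricting the nonnegative integrand of $J$ to $[T_-,T_+]$, estimating the kinetic part by Cauchy--Schwarz, using $|y(T_+)-y(T_-)|\ge\beta-\beta/2=\beta/2$, and writing $\Delta:=T_+-T_-$, I obtain
\[ J(y)\ \ge\ \int_{T_-}^{T_+}\Bigl(\tfrac12|\dot y|^2+\tfrac12|\nabla V(y)|^2\Bigr)\dd s\ \ge\ \frac{\beta^2}{8\Delta}+\frac{\lambda^2\beta^2}{8}\,\Delta\ \ge\ \frac{\lambda\beta^2}{4}, \]
the last inequality by the arithmetic--geometric mean inequality, which disposes of the unknown excursion length $\Delta>0$. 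Since $\beta\ge\veps$ in both cases, this gives $J(y)\ge\tfrac{\lambda}{4}\veps^2=\tfrac{1}{2C}\veps^2$, as claimed.

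The mechanism is the competition displayed in the last chain: a short excursion is expensive through $\int|\dot y|^2$, while an excursion that lingers at distance $\asymp\beta$ from a critical point is expensive through $\int|\nabla V(y)|^2$, since $|\nabla V|$ is bounded below linearly there; AM--GM balances these and yields the quadratic scaling in $\beta$, hence in $\veps$. The step I expect to require the most care is the \emph{uniformity} of $\lambda$ and $r_0$ across $\mathcal E$ --- which is precisely where finiteness of $\mathcal E$ and non-degeneracy of the Hessians (i.e.\ admissibility) are used --- together with the bookkeeping that guarantees, simultaneously in both cases, the existence of the times $T_\pm$ and the confinement $|y(t)-x^-|\le r_0$ on $[T_-,T_+]$; granting that, the estimate is immediate.
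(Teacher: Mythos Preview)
Your proof is correct. The uniform Taylor bound, the case split, the construction of $T_\pm$, and the AM--GM balance between the kinetic and potential contributions are all sound; continuity of $y$ (needed for the intermediate-value arguments) follows from $\dot y\in L^2$ exactly as the paper remarks.

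The paper takes a somewhat different route. After establishing the same two-sided bound $\tfrac{1}{C}|y-\xt|^2\le|\nabla V(y)|^2\le C|y-\xt|^2$ near each $\xt\in\mathcal E$, it works on the tail $[t^+(\veps),\infty)$ where $|y-x^+|<\veps$, applies the pointwise AM--GM inequality $\tfrac12|\dot y|^2+\tfrac12|\nabla V(y)|^2\ge|\dot y|\,|\nabla V(y)|$ in the integrand (the Modica--Mortola trick), bounds $|\nabla V(y)|\gtrsim|y-x^+|$, and then recognises $|y-x^+|\,|\dot y|\ge(x^+-y)\cdot\dot y=-\tfrac{\dd}{\dd s}\tfrac12|y-x^+|^2$ to integrate exactly. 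This avoids any case analysis and any appeal to an excursion length $\Delta$: the bound drops out in one line once the derivative is identified. Your approach instead isolates an annulus crossing on $[T_-,T_+]$, estimates the kinetic and potential parts separately as $\gtrsim\beta^2/\Delta$ and $\gtrsim\lambda^2\beta^2\Delta$, and then uses AM--GM on those two numbers to eliminate $\Delta$. Both arguments are standard; yours is perhaps more robust (it never needs the trajectory to stay inside the $\veps$-ball, hence the Case~2 route through $\beta=r_0$), while the paper's is shorter and dovetails with the Modica--Mortola structure used elsewhere in the $\Gamma$-limit analysis.
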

\begin{proof}
We first show that there exist numbers
$\delta, C>0$ such that, for all $x,y \in \R^N$ with $x\in
\mathcal{E}$ and with the property
$|y-x| <\delta$, the estimates
\begin{align}\label{DVbd.eq}
\frac{1}{C} |y-x|^2 \leq |DV(y)|^2 \leq C |y-x|^2
\end{align}
Indeed, thanks to the smoothness of $V$ and nondegeneracy
condition $\det(D^2 V(x))\neq 0$ for $x \in \mathcal{E}$, for
sufficiently small $\veps$ and large $\tilde C$
\begin{align*}
&\left|DV(y)- D^2 V(x)\cdot(y-x)\right| \leq \tilde C |y-x|^2,\\
&|z| \leq \tilde C \left|D^2 V(x)\cdot z\right| \qquad \forall z \in \R^N
\end{align*}
hold. Thus
\begin{align*}
|\nabla V(y)| \geq
|D^2 V(x)\cdot(y-x)|-\tilde C|y-x|^2\\
\geq\tfrac{1}{\tilde C}|y-x|-\tilde C|y-x|^2
\geq  \tfrac{1}{2\tilde C}|y-x|
\end{align*}
if $|y-x|\leq \min\{\delta,\frac{1}{2\tilde C^2}\}$. This shows that
the first inequality in (\ref{DVbd.eq}) holds. The second inequality
is obtained in a similar way.

Let $y \in X(x^-,x^+)$ and recall that by Sobolev's imbedding theorem
$y$ is continuous.
For each $\veps \in \left(0, \min\left\{\delta, \tfrac{1}{2 C^2}\right\}
\right)$
we define
\begin{equation} \label{tveps}
 t^+(\veps)=\min\left\{s \in \R \; \left| \; |y(t')-x^+|< \veps
\quad\text{ for all } t'>s\right.\right\},
\end{equation}
and $t^-$ analogous. Then we obtain
\begin{align*}
J(y) \geq& \int_{t^+(\veps)}^{\infty} |\nabla V(y(s))| \, |\dot y(s)|
\,\dd s \geq \frac{1}{2C} \int_{t^+(\veps)}^\infty(x^+-y)\cdot \dot y(s) \,
\dd s\\
=& \frac{1}{4C} |y(t^+(\veps))-x^+|^2
= \frac{1}{4C} \veps^2.
\end{align*}
The claim follows since we can derive the same estimate with $t^-$ instead
of $t^+$.
\end{proof}
\begin{proof}[Proof of Proposition~\ref{gamlim.prop}]
Let $r>0$ be the separation of the
stationary points of $V$, i.e
$$r = \min\{ |x-x'| \; | \; x,x' \in \mathcal E, \; x \neq x'\},$$ and
let $y_l \in X(x^-,x^+)$ be such that $\lim_{l \to \infty} J(y_l)
=\Phi(x^-,x^+)$. There exists a sequence $t_l \in \R$ such that that
$\dist(y_l(t_l),\mathcal E) \geq r/2$ and thus Lemma~\ref{bv.lem}
applied to the translated sequence $y_l(\cdot -t_l)$ implies that
$\inf_{l} J(y_l)\geq c$, with $c = \frac{1}{2C}
\min\{r/2,\veps_0\}^2.$
\end{proof}

The proof of Proposition \ref{transitionstates},
showing the existence of minimizing connecting orbits,
is established with the direct
method of the calculus of variations, with the
aid of the following lemma.

\begin{lemma} \label{exist}
Let $x^\pm \in \mathcal E$.
If there exists a minimizing sequence $\{y_l\}_{l \in \mathbb{N}}
\in X(x^-,x^+)$ such that the density $\rho_l(t) =
\frac{1}{2}\left(|\dot y_l(t)|^2 + |D V(y_l(t))|^2\right)$
is tight in $L^1(\R)$, then there exits a minimizer
$y^\pm \in X(x^-,x^+)$ such that $J(y^\pm) = \Phi(x^-,x^+).$
\end{lemma}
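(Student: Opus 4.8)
The plan is to apply the direct method of the calculus of variations on the whole line, with the tightness hypothesis supplying the compactness that would otherwise be lost at $t=\pm\infty$. Let $\{y_l\}$ be a minimizing sequence as in the statement, so $J(y_l)\to\Phi(x^-,x^+)<\infty$ (and $X(x^-,x^+)\neq\emptyset$, e.g.\ any smooth path constant outside a compact interval lies in it). First I would record a priori bounds. Boundedness of $J(y_l)$ gives $\sup_l\bigl(\|\dot y_l\|_{L^2(\R)}+\|DV(y_l)\|_{L^2(\R)}\bigr)<\infty$, and combining the weak coercivity \eqref{weak-coercivity} with the H\"older estimate $|y_l(s)-y_l(t)|\le|s-t|^{1/2}\|\dot y_l\|_{L^2(\R)}$ yields a uniform bound $\|y_l\|_{L^\infty(\R)}\le K$: were $|y_l|$ to attain a large value $M$ somewhere, then $|y_l|$ would exceed the radius $R$ of \eqref{weak-coercivity} on an interval of length of order $(M-R)^2$, on which $|DV(y_l)|$ is bounded below, so $\|DV(y_l)\|_{L^2(\R)}^2$ would be of order $(M-R)^2$ --- impossible.

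These bounds make $\{y_l\}$ bounded in $H^1(-R,R)$ for each $R$, so a diagonal subsequence (not relabelled) converges weakly in $H^1_{\mathrm{loc}}(\R)$, hence locally uniformly by Rellich and Arzel\`a--Ascoli, to some $y$ with $\|y\|_{L^\infty(\R)}\le K$, while $\dot y_l\rightharpoonup\dot y$ in $L^2(\R)$. Weak lower semicontinuity of the kinetic term and locally uniform convergence of $|DV(y_l)|^2$ give, for every $R$,
\begin{align*}
\int_{-R}^R\Bigl(\tfrac12|\dot y|^2+\tfrac12|DV(y)|^2\Bigr)\dd s \;\le\; \liminf_{l\to\infty}J(y_l) \;=\; \Phi(x^-,x^+);
\end{align*}
letting $R\to\infty$ we obtain $\dot y,\,DV(y)\in L^2(\R)$ and $J(y)\le\Phi(x^-,x^+)$. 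The same semicontinuity applied on $\{|t|>R\}$ shows the tails of $\rho_\infty:=\tfrac12(|\dot y|^2+|DV(y)|^2)$ are dominated by those of $\rho_l$, so $\rho_\infty$ is integrable with uniformly small tails.

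The heart of the proof is to upgrade this to $y\in X(x^-,x^+)$. Since $\tfrac{\dd}{\dd t}|DV(y(t))|^2$ is controlled by the product $|DV(y(t))|\,|\dot y(t)|$ of two $L^2(\R)$ functions (the $L^\infty$ bound on $y$ bounding the Hessian along the path), $|DV(y(t))|^2$ has limits at $\pm\infty$, necessarily $0$ because $|DV(y)|^2\in L^1(\R)$; a standard compactness argument using the finiteness of $\mathcal E$, nondegeneracy of the Hessians and \eqref{weak-coercivity} then gives $\dist(y(t),\mathcal E)\to0$, and continuity of $y$ with the positive separation $r$ of the points of $\mathcal E$ forces $y(t)\to p^\pm$ for single points $p^\pm\in\mathcal E$. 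To show $p^+=x^+$ (the case $p^-=x^-$ is symmetric) I would argue by contradiction: if $p^+\neq x^+$, choose $\eta>0$ small; tightness gives an $R$, which we enlarge so that also $|y(R)-p^+|<r/4$ while still $\int_{|t|>R}\rho_l<\eta$; local uniform convergence then makes $|y_l(R)-x^+|$ bounded below for all large $l$, so the estimate in the proof of Lemma~\ref{bv.lem}, applied to $y_l$ on $[R,\infty)$ (where it eventually enters and stays in a small ball about $x^+$), forces $\int_R^\infty\rho_l$ to exceed a fixed positive constant, contradicting $\int_R^\infty\rho_l<\eta$ for $\eta$ small enough.

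Hence $y\in X(x^-,x^+)$, so $J(y)\ge\Phi(x^-,x^+)$, and with the reverse inequality above this gives $J(y)=\Phi(x^-,x^+)$: $y$ is the desired minimizer. I expect the last paragraph to be the crux: without tightness the weak limit $y$ may only connect $x^-$ to an intermediate critical point, a piece of the transition having drifted off to $t=\pm\infty$ --- and by Proposition~\ref{transitionstates} this genuinely happens in general --- so it is precisely the tightness hypothesis that localises the transition and rescues the argument.
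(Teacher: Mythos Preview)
Your argument follows the same direct-method template as the paper: extract a weak $H^1_{\mathrm{loc}}$ limit $y$, use tightness together with the lower bound of Lemma~\ref{bv.lem} to show that the boundary conditions $y(t)\to x^\pm$ are not lost in the limit, and conclude $J(y)\le\Phi(x^-,x^+)$ by weak lower semicontinuity. Your contradiction showing $p^\pm=x^\pm$ is exactly the mechanism the paper encodes in the ``clamping'' estimate~\eqref{clamping}.

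There is, however, one genuine omission. Membership in $X(x^-,x^+)$ requires $y\in BV(\R)$, equivalently $\dot y\in L^1(\R)$, and you have only established $\dot y\in L^2(\R)$ together with the endpoint limits. These two facts alone do \emph{not} force $\dot y\in L^1(\R)$: a path can oscillate near infinity with square-summable but not summable derivative while still converging. Without $y\in X(x^-,x^+)$ you cannot invoke $J(y)\ge\Phi(x^-,x^+)$ in your final line, and the argument does not close. The paper devotes a separate paragraph to this point: once $J(y)\le\Phi$ and $y(t)\to x^+$ are known, the restriction $y|_{[t,\infty)}$ must minimise the tail functional $G_t(\cdot)=\tfrac12\int_t^\infty(|\dot z|^2+|DV(z)|^2)\,\dd s$ subject to $z(t)=y(t)$ (otherwise splicing in a compactly-supported competitor would produce an element of $X(x^-,x^+)$ with energy strictly below $\Phi$). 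Testing $G_t$ against the affine path from $y(t)$ to $x^+$ and using the quadratic bounds~\eqref{DVbd.eq} gives $g(t):=G_t(y)\le C'|y(t)-x^+|^2$, while the lower bound in~\eqref{DVbd.eq} gives $-g'(t)\ge\tfrac{1}{2C}|y(t)-x^+|^2\ge\mu\,g(t)$. Gronwall then yields exponential decay of $g$, hence $|\dot y|^2\le-2g'$ is integrable and $\dot y\in L^1(\R)$. You should add this step (or an equivalent one) before concluding $y\in X(x^-,x^+)$.
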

\begin{proof}
First we show that the boundary conditions are not lost during the
passage to the limit.  The tightness of $\rho_l$ implies that for each
$\veps>0$ there exists $\tau^{\pm}(\veps) \in \bbR$ with
$\tau^-(\veps)<\tau^+(\veps)$ and the property that, for all $l$,
\begin{align}
\label{tightness.eq}
\limsup_{l\to \infty}\frac{1}{2}\int_{\R\setminus (\tau^-,\tau^+)}\left(|\dot y_l|^2 + |D V(y_l(s))|^2\right)\, \dd s \leq \frac{1}{4C} \veps^2,
\end{align}
where $C$ is the constant in Lemma~\ref{bv.lem}.
Let
$$t_l^+(\veps)=\min\left\{s \in \R \; \left| \; |y_l(t')-x^+|< \veps
\quad\text{ for all } t'>s\right.\right\},$$
and $t_l^-$ analogously.
Without loss of generality we assume that $t^-(\veps)\leq 0 \leq
t_l^+(\veps)$ and $|y_l(0)-x^{\pm}| \geq \veps$.
Then Lemma~\ref{bv.lem} implies that
\begin{align*}
 &\limsup_{l\to \infty}\frac{1}{2}\int_{t^+_l}^\infty \left(|\dot y_l|^2 + |D V(y_l(s))|^2\right)\, \dd s \geq \frac{1}{4C} \veps^2.
\end{align*}
Together with inequality~(\ref{tightness.eq})
we obtain that $\limsup_{l\to \infty} t_l^+(\veps) \leq \tau^+(\veps)$.
Another application of this argument with $t^+$ replaced by $t^-$
delivers the desired result concerning the boundary
conditions:
for each $\veps>0$ there exists $-\infty<\tau^-(\veps)<\tau^+(\veps)< \infty$
such that
\begin{align} \label{clamping}
\left\{ \begin{array}{l}
\limsup\limits_{l \to \infty} \sup\limits_{s \geq \tau^+}|y_l(s)-x^+| \leq \veps,\\
\limsup\limits_{l \to \infty} \sup\limits_{s \leq \tau^-}|y_l(s)-x^-| \leq \veps.
\end{array}\right.
\end{align}
From the bound on $\dot y_l$ in
$L^2\bigl(\tau^-(\veps),\tau^+(\veps)\bigr)$ we
deduce that the length of the vector $y_l(0)\in \R^N$ is bounded.
Since $J(y_l)$ is bounded the sequence $\dot y_l$ is bounded
in $L^2(\R)$.
We extract a subsequence $y_l$ (not relabeled) such that
$\dot y_l$
converges weakly to $\eta \in L^2(\R)$ and $y_l(0)$
converges to $z \in \R^d$.

We define the limiting path $y^\pm$ by
$$y^\pm(t) = z + \int_{0}^t \eta(s)\, \dd s$$
Equation (\ref{clamping}) implies that $\lim_{s \to \pm \infty}
y^\pm(s) = x^\pm$, i.e. the boundary conditions are satisfied.

Next we demonstrate that $y^\pm$ is a minimizer, i.e.
$$ \lim_{l \to \infty} J(y_l) = J(y^\pm).$$
Note that $J$ is weakly lower semicontinuous: the first
term is weakly lower semicontinuous because it is convex
whilst the second term is weakly continuous. To see the
weak continuity note that
\begin{align*}
&\int_{\R} |\nabla V(y^\pm(s))|\,\dd s = \lim_{R \to \infty} \int_{-R}^R
|\nabla V(y^\pm(s))|\,\dd s\\
= &\lim_{R \to \infty} \lim_{l \to \infty} \int_{-R}^R |\nabla V(y_l(s))|\,\dd s
= \lim_{l \to \infty} \int_{-\infty}^\infty |\nabla V(y_l(s))|\,\dd s.
\end{align*}
The last equation is again due to the tightness of $J(y_l)$, the second but last
equation holds because of Sobolev's embedding theorem.
The weak lower semiconinuity of $J$, coupled with the
fact that $\{y_l\}$ is a minimizing sequence, shows that
$J(y^\pm)=\lim_{l \to \infty} J(y_l).$

Finally we show that $\dot y^\pm \in L^1(\R)$, which entails the claim
$y^\pm \in X(x^-,x^+)$.
Define the functional
$$G_t(y) = \frac{1}{2}\int_t^\infty\left(|\dot{y}(s)|^2+ |DV(y(s)|^2\right)
\, \dd s,
$$
and the function $ g(t) = G_t(y^\pm)$.
Note that $(y^\pm(s))_{s\geq t}$ minimizes
$G_t(y)$ for every $t,$
subject to the boundary condition $y(t) = y^\pm(t).$
Testing $G_t(\cdot)$ with a function which
is affine on $[t,t+1]$ and assumes the value $x^+$ for all $s \geq t+1$
together with estimate (\ref{DVbd.eq}) delivers the bound
\begin{align} \label{dp.eq}
g(t) \leq \frac{1}{2} |y^\pm(t)-x^+|^2 \int_0^1\left(1+ C s^2\right)\,
\dd s =  \frac{1}{2}\left(1+\frac{C}{3}\right)|y^\pm(t)-x^+|^2.
\end{align}

Recall that $\lim_{t \to \infty}y^\pm(t) = x^+$ and choose $t_0>0$ such that
$|y(t)-x^+|<\veps_0$ for all $t>t_0$, where $\veps_0$ is defined in
Lemma~\ref{bv.lem}.
The function $g$ satisfies for all $t>t_0$ the differential inequality
\begin{align*}
-\frac{\dd g}{\dd t}(t) \geq \frac{1}{2}|DV(y^\pm(t))|^2\geq
\frac{1}{2 C}|y^\pm(t)-x^+|^2 \geq \mu g(t),
\end{align*}
with $\mu = \frac{3}{(3+C)C}$.
The last inquality is (\ref{dp.eq}), the penultimate inequlity is due
to~\eqref{DVbd.eq}. Gronwall's inequality implies the exponential decay
bound
$g(t) \leq \veps_o e^{-\mu(t-t_0)}.$
Since $|\dot y^\pm(t)|^2 \leq - 2 \frac{\dd g}{\dd t}(t)$ we have shown that
$\dot y \in L^1([t_0,\infty))$. The same argument also shows that
$y^\pm \in L^1((-\infty,t_0])$ and thus $y^\pm \in L^1(\R)$.
\end{proof}

The proof of Proposition~\ref{transitionstates} also
relies on Lions' concentration compactness lemma
which we state here for completeness.

\begin{lemma}\cite{lions84}
\label{l:lions}
Let $(\rho_l)_{l \geq 1}$ be a sequence in $L^1(\R)$ satisfying
$$ \rho_l \geq 0 \text{ in } \R \text{ and } \lim_{l \to \infty}\int_{\R} \rho_l(t) \, \dd t = \lambda,$$
where $\lambda>0$ is fixed. Then, there exists a subsequence (not relabeled)
satisfying one of the following three possibilities:
\begin{enumerate}
\item (compactness) there exists $t_l \in \R$ such that $\rho_l(\cdot - t_l)$ is tight, i.e.
    $$ \forall \veps>0, \exists R< \infty \text{ such that }
    \int_{t_l-R}^{t_l+R} \rho_l(s)\, \dd s \geq \lambda -\veps;$$
\item (vanishing) $$\lim_{l \to \infty} \sup_{t \in \R} \int_{t-R}^{t+R} \rho_l(s)\, \dd s =0\text{ for all } R>0;$$
\item (splitting) there exists $0<\alpha<\lambda$ such that for all $\veps>0$
there exists $l_0\geq 1$ and $\rho^1_l,\rho^2_l \in L^1_\geq(\R)$
such that for all $l\geq l_0$
$$\left\{\begin{array}{rl}
\| \rho_{l}^1 + \rho_l^2-\rho_l\|_{L^1} + \bigl|\|\rho_l^1\|_{L^1}-
\alpha\bigr|
+\bigl|\| \rho_l^2\|_{L^1} +\alpha-\lambda\bigr| \leq \veps,\\
\lim_{l\to \infty} \dist(\supp(\rho_l^1), \supp(\rho_l^2))= \infty.
\end{array} \right.$$
\end{enumerate}
\end{lemma}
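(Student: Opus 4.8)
This is the classical concentration--compactness lemma of \cite{lions84}; we outline the argument for completeness. The plan is to analyse the L\'evy \emph{concentration function}
$$ Q_l(t) = \sup_{y \in \R}\int_{y-t}^{y+t}\rho_l(s)\,\dd s, \qquad t\ge 0. $$
For each $l$ the function $Q_l$ is nonnegative and nondecreasing, with $Q_l(t)\to\int_\R\rho_l$ as $t\to\infty$ (because $\rho_l\in L^1(\R)$), while $\int_\R\rho_l\to\lambda$ as $l\to\infty$. First I would apply Helly's selection theorem to extract a subsequence (not relabelled) along which $Q_l(t)\to Q(t)$ for every $t\ge0$, with $Q\colon[0,\infty)\to[0,\lambda]$ nondecreasing, and then set $\alpha:=\lim_{t\to\infty}Q(t)\in[0,\lambda]$. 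The three alternatives of the lemma correspond exactly to $\alpha=0$, $\alpha=\lambda$ and $0<\alpha<\lambda$, and the work in each case is to read off the stated conclusion.

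If $\alpha=0$ then $Q\equiv0$, i.e.\ $\lim_{l\to\infty}Q_l(t)=0$ for every $t>0$, which is precisely the vanishing alternative. If $\alpha=\lambda$, the plan is to produce one sequence of translates that works at all scales. Given $\veps>0$, choose $t_\veps$ with $Q(t_\veps)>\lambda-\veps$; then for $l$ large there is $y_l^\veps$ with $\int_{y_l^\veps-t_\veps}^{y_l^\veps+t_\veps}\rho_l>\lambda-\veps$. Running this along $\veps=2^{-k}$ and using that two intervals each carrying $\rho_l$-mass more than $\lambda/2$ must overlap (otherwise the total mass would exceed $\lambda$, impossible for $l$ large), the centres obtained for smaller and smaller $\veps$ can be arranged to lie near a single point $t_l$; enlarging the radii accordingly shows that $\rho_l(\cdot-t_l)$ is tight, which is the compactness alternative.

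The substantive case is $0<\alpha<\lambda$. Here I would fix $\veps>0$, choose a large \emph{fixed} radius $R$ with $\alpha-\veps<Q(R)\le\alpha$, and for $l$ large choose $y_l$ almost attaining $Q_l(R)$, so that $h_l:=\int_{y_l-R}^{y_l+R}\rho_l$ obeys $|h_l-\alpha|<2\veps$ --- the lower bound since $Q_l(R)\to Q(R)$, the upper bound since $h_l\le Q_l(R)\to Q(R)\le\alpha$. The key observation is that for \emph{every fixed} $R'\ge R$ one has $\int_{y_l-R'}^{y_l+R'}\rho_l\le Q_l(R')\to Q(R')\le\alpha$, so the shell $R<|s-y_l|<R'$ carries $\rho_l$-mass at most $2\veps+o(1)$ as $l\to\infty$; a diagonal argument over integer radii then yields $R_l\to\infty$ with $\int_{R<|s-y_l|<R_l}\rho_l<3\veps$ for all $l$ large. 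Taking $\rho_l^1$ equal to $\rho_l$ on $\{|s-y_l|\le R\}$ and zero elsewhere, and $\rho_l^2$ equal to $\rho_l$ on $\{|s-y_l|\ge R_l\}$ and zero elsewhere, one checks directly that $\|\rho_l^1+\rho_l^2-\rho_l\|_{L^1}$ equals the shell mass, that $\|\rho_l^1\|_{L^1}=h_l$ lies within $O(\veps)$ of $\alpha$, that $\|\rho_l^2\|_{L^1}=\int_\R\rho_l-\int_{|s-y_l|\le R_l}\rho_l$ lies within $O(\veps)$ of $\lambda-\alpha$, and that $\dist(\supp\rho_l^1,\supp\rho_l^2)\ge R_l-R\to\infty$; after relabelling the error, this is the splitting alternative.

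The hard part is this last case: one must juggle the fixed radius $R$, the diverging radii $R_l$ and the tolerance $\veps$ at once, keeping careful track of which inequalities hold uniformly in $l$ and which only for $l$ sufficiently large, and in particular constructing $R_l\to\infty$ compatible with the shell-mass estimate by a diagonal extraction. The monotonicity/Helly input and the vanishing and compactness cases are routine.
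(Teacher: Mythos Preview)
The paper does not prove this lemma at all: it is simply quoted from \cite{lions84} and stated ``for completeness'' before being applied in the proof of Proposition~\ref{transitionstates}. Your sketch via the L\'evy concentration function $Q_l$ and Helly's selection principle is the standard argument from Lions' original work, and the outline you give for the three cases $\alpha=0$, $\alpha=\lambda$, $0<\alpha<\lambda$ is correct; so there is nothing to compare against, and your proposal is fine as a self-contained justification.
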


\begin{proof}[Proof of Proposition~\ref{transitionstates}]
Let $y_l \in X(x^-,x^+)$ be a minimizing sequence of $J$ and
define $\rho_l(t)$ as in Lemma \ref{exist}.
We now use Lemma \ref{l:lions} in the following way: we
show that vanishing cannot occur, and that splitting can
only occur a finite number of times.

To rule out vanishing we define, for real number
$\veps$ satisfying
$$\veps\in\Bigl(0,\frac{1}{2}\min\{ |x-z| \; \big| \; x,z \in \mathcal E, \; x
\neq z\}\Bigr),$$
the starting and ending time of the final transition:
\begin{align*}
t_l^+ & = \sup
\left\{ t \in \R \; | \; \inf_{s\leq t} |y_l(s) -x^+| \geq \veps
\right\},\\
t_l^- &= \sup
\left\{ t\leq t_l^+ \; | \; \dist(y_l(t), \mathcal E\setminus
\{x^+\}) \leq \veps\right\}.
\end{align*}
By construction $t_l^- \leq t_l^+$ and Jensen's inequality implies
that
\begin{align*}
\int_{t_l^-}^{t_l^+} \rho_l(s) \, \dd s \geq& \frac{1}{2} \int_{t_l^-}^{t_l^+}
\left( \frac{\dist(x^+,\mathcal E\setminus \{x^+\})-
2\veps}{t_l^+-t_l-}\right)^2\, \dd s\\
= & \frac{\left(\dist(x^+,\mathcal E\setminus \{x^+\})-
2\veps\right)^2}{2(t_l^+-t_l^-)}.
\end{align*}
Since $\limsup_{l\to \infty} \int_{\R} \rho_l(s) \, \dd s < \infty$
this implies that $\liminf_{l \to \infty} (t_l^+-t_l^-) >0$.
Furthermore, since $V$ admissible
there exists a number $c>0$ such that $\inf_{s
  \in [t_l^-,t_l^+]} \rho_l(s)\geq c$ and thus
$$\limsup_{l \to \infty} (t_l^+-t_l^-) \leq \frac{1}{c}
\limsup_{l \to \infty} \int_{t_l^-}^{t_l^+}
\rho_l(s) \, \dd s < \infty.$$
This pair of inequalities shows that $\rho_l$ does not vanish.

If the sequence $\rho_l$ splits, then there exist sequences $a_l \leq b_l$
such that $[a_l,b_l] \cap \supp(\rho_l^1 + \rho_l^2) = \emptyset$ and
\begin{align} \label{separ}
\lim_{l \to \infty}(a_l-b_l) = \infty,\\
\label{twobumps}
\liminf_{l\to \infty} \int_{-\infty}^{a_l} \rho_l(t) \, \dd t>0
\text{ and }
\liminf_{l\to \infty} \int_{b_l}^\infty \rho_l(t) \, \dd t>0.
\end{align}
We claim now that there exists a third sequence $t_l \in [a_l,b_l]$ and
a stationary point $x \in \mathcal E$ such that
$\lim_{l\to  \infty} y_l(t_l)=x$.
This is because
\begin{align*}
\limsup_{l\to \infty}\int_{a_l}^{b_l} \rho_l(t)\, \dd t< \infty,
\end{align*}
which together with (\ref{separ}) and the weak
coercivity assumption
(\ref{weak-coercivity})
delivers that
$$\lim_{l \to \infty} \inf\left\{ \dist(y_l(t),\mathcal E) \; | \; t \in
[a_l,b_l]\right\}= 0.$$

We define next the sequences $y_l^1 \in X(x^-,x)$ and $y_l^2 \in
X(x,x^+)$ as follows:
\begin{align*}
y_l^1(t) = \left\{\begin{array}{rl}
y_l(t) & \text{ if } t\leq t_l,\\
(t_l-t+1)y_l(t_t)+(t-t_l)x & \text{ if } t_l<t<t_l+1,\\
x & \text{ if } t\geq t_l+1,
\end{array} \right.\\
y_l^2(t) = \left\{\begin{array}{rl}
x & \text{ if } t\leq t_l-1,\\
(t-t_l+1)y_l(t_l)+ (t_l-t)x & \text{ if } t_l-1<t<t_l,\\
y_l(t) & \text{ if } t\geq t_l.
\end{array} \right.
\end{align*}
Clearly $y_l^1+y_l^2-x \in X(x^-,x^+)$ and
the convergence $\lim_{l\to \infty} y_l(t_l) = x$ implies that
\begin{align} \label{ensplit}
\lim_{l \to \infty} J(y_l^1+y_l^2-x) =
\lim_{l \to \infty} \left(J(y_l^1)+J(y_l^2)\right) = \lim_{l\to \infty}
J(y_l).
\end{align}
Moreover, equation~(\ref{twobumps}) implies
\begin{align} \label{quantized}
\liminf_{l \to \infty} J(y_l^1)>0 \text{ and } \liminf_{l \to \infty} J(y_l^2)>0.
\end{align}
We will show now that $x \not\in \{x^\pm\}$. Indeed, if $x = x^-$, then
$y_l^2 \in X(x^-,x^+)$ and by~(\ref{ensplit})
$$ \liminf_{l \to \infty} J(y^2_l) \leq \lim_{l\to \infty} J(y_l)
-\liminf_{l\to \infty} J(y_l^1).$$
Equation~(\ref{quantized}) delivers a contradiction to
the assumption that $y_l$ is a minimizing sequence.
An analogous argument can
be constructed for the case $x = x^+$
and thus $x \in \mathcal E \setminus \{x^\pm\}$.

The construction yields the following equation
\begin{align*} \nonumber
\Phi(x^-,x^+)\geq& \lim_{l\to \infty}
J(y_l^1+y_l^2-x) \geq  \liminf_{l\to \infty} J(y_l^1) + \liminf_{l\to \infty}
J(y_l^2)\\
 \geq& \Phi(x^-,x) +\Phi(x,x^+).
\end{align*}

Progressing inductively we obtain
a sequence $x_0,x_1,\ldots , x_k \in \mathcal E$ such that
$x_0=x^-$, $x_k=x^+$, $x_{i-1} \neq x_{i}$ for all $i=1\ldots k$ and
\begin{align*}
\Phi(x^-,x^+) \geq \sum_{i=1}^k \Phi(x_{i-1},x_{i}).
\end{align*}
Proposition~\ref{gamlim.prop} implies that $k\leq \Phi(x^-,x^+)/c$,
placing a finite bound on the number of splittings possible.  The
minimizing sequences associated with $X(x_{i-1},x_i)$ are eventually
tight and so achieve their contribution to the infimum at a minimizer
of $J$.
\end{proof}

\begin{proof}[Proof of Theorem~\ref{t:gamlim}]
Now the representation formula for the $\Gamma$-limit will be verified.
Let $x \in BV([0,1],\R^d)$ be a limit path and $x_\veps \in H^1(0,1)$
such that $x_\veps$ converges to $x$ weak-* in $BV([0,1])$.  First we
show that the Laplacian can be treated separately.  Since weak-*
convergence entails boundedness in $BV$ there is a constant $C>0$ such
that $\limsup_{\veps\to 0}\|x_\veps\|_{L^\infty}\leq C$. Furthermore,
thanks to Helly's theorem we can select a subsequence which converges
pointwise for almost every $s \in [0,1]$ and therefore by dominated
convergence
\begin{align*}
\lim_{\veps \to 0} \int_0^1 \Delta V(x_\veps(s))\, \dd s=
\int_0^1 \Delta V(x(s))\, \dd s.
\end{align*}
Note that only continuity of $\Delta V$ is required for this step.
This implies that
\begin{equation}
\liminf_{\veps \to 0} I_\veps(x_\veps) = \liminf_{\veps\to 0}
J_\veps(x_\veps) - \int_0^1 \Delta V(x(s))\, \dd s
\label{eq:sep}
\end{equation}
with $J_\veps$ as defined in \eqref{eq:IJ}.

Next we consider the case where the set of times $s$ where the
limit function $x(s)
\in \mathcal E$ does not have full measure, so that
$I_0(x)=\infty$.  Define
$$M=\meas\left(\left\{t\in [0,1] \; | \; x(t) \not \in \mathcal E\right\}
\right).$$
There exists
$\delta>0$ such that
$$\liminf_{\veps>0} \meas\left(\left\{ t \in [0,1] \; | \;
\dist(x(t),\mathcal E)> \delta\right\}\right) >M/2.$$
Since $V$ satisfies the weak coercivity condition
the infimum of $\left|\nabla V(x)\right|$
exceeds some $\lambda>0$, on this set with measure exceeding $M/2.$
Thus
\begin{align*}
\liminf_{\veps\to 0} J_\veps(x_\veps) \geq \liminf_{\veps \to
  0}\frac{1}{2\veps} \int_0^1 |\nabla V(x_\veps(s))|^2 \, \dd s \geq
\liminf_{\veps \to 0} \frac{M}{4 \veps} \lambda^2=+\infty.
\end{align*}

Finally we consider the case where $I_0(x)$ is finite.
The proof of the  $\liminf$-inequality follows directly from the definition, as we now show.
Since $x \in BV([0,1])$ and $I_0(x)$ is finite, the set of discontinuity
points $\mathcal D \subset [0,1]$ is finite. We assume next that $\mathcal D \subset
(0,1)$, i.e. $x$ is continuous at the end-points. If $x$ jumps at one or both end points
the same argument can be repeated with obvious modifications of the cut-offs.

Thanks to this assumption for each $\tau \in \mathcal D$ the left-sided and
right-sided limits $x^\pm(\tau)$ exist.
Since $x_\veps$ converges weak-* in $BV([0,1])$ to the piecewise
constant function $x$ Helly's theorem implies that $\lim_{\veps \to 0}
x_\veps(t) = x(t)$ for almost every $t \in [0,1]\setminus \mathcal
D$. Choose
\begin{align*}
\delta = \tfrac{1}{2}\min\left\{\tau-\tau' \; | \; \tau,\tau' \in \mathcal D \cup \{0,1\}, \; \tau < \tau'\right\}.
\end{align*}
and for each $\tau \in \mathcal D$ a pair of such points
$t^\pm\in (0,1)$ such that $\tau-\delta\leq t^-
<\tau < t^+ \leq \tau+\delta$ and $\lim_{\veps \to 0} x_\veps(t^\pm) =
x^\pm(\tau).$ Setting $y_\veps(t) =x_\veps(\veps t)$ one obtains
\begin{align*}
&J_\veps(x_\veps) = \frac{1}{2}\int_0^1\left(\veps \left|\dot
  x(s)\right|^2 +\frac{1}{\veps}\left| \nabla V\right|^2\right)\,
  \dd s  = \frac{1}{2} \int_0^{1/\veps}\left(\left|\dot
  y_\veps(s)\right|^2 +\left| \nabla V(y_\veps(s))\right|^2\right)\, \dd s\\
 \ge &\sum_{\tau \in \mathcal D} \frac{1}{2}\int_{(t^-(\tau)-\delta)/\veps}^{(t^+(\tau)+\delta)/\veps} \left(\left|\dot
  y_\veps(s)\right|^2 +\left| \nabla V(y_\veps(s))\right|^2\right)\, \dd s.
\end{align*}
To justify the boundary condition we modify the path segments $y_\veps(t-\tau/\veps)$ in a way such that the energy changes only slightly.
Define now the function
$$ \bar y^\tau_\veps(t) = \left\{\begin{array}{rl}
x^-(\tau) & \text{ if } s\leq\frac{t^-}{\veps}-1\\
\bigl(\frac{t^-}{\veps}-s\bigr) x^-(\tau) + \bigl(s-
\frac{t^-}{\veps}+1\bigr)x_\veps(t^-)
& \text{ if } \frac{t^-}{\veps} -1< s< \frac{t^-}{\veps}\\
y_\veps(s) & \text{ if } \frac{t^-}{\veps} \leq s \leq \frac{t^+}{\veps},\\
\bigl(s-\frac{t^+}{\veps}\bigr)x^+(\tau)+\bigl(\frac{t^+}{\veps}
+1-s\bigr) x_\veps(t^+)
& \text{ if } \frac{t^+}{\veps}< s < \frac{t^+}{\veps}+1,\\
x^+(\tau) & \text{ if } s\geq \frac{t^+}{\veps}+1.
\end{array} \right.$$
The construction above implies that $\bar y^\tau_\veps \in X(x^-(\tau),x^+(\tau))$ and thus
$$ J(\bar y^\tau_\veps) \geq \Phi(x^-(\tau),x^-(\tau)).$$
Furthermore, the modification of the function $y_\veps$ affects the energy
only in a negligible way, i.e.
\begin{align*}
\sum_{\tau \in \mathcal D}
\frac{1}{2}\int_{(\tau-\delta)/\veps}^{(\tau+\delta)/\veps} \left(\left|\dot
  y_\veps(s)\right|^2 +\left| \nabla V(y_\veps(s))\right|^2\right)\, \dd s
\geq \sum_{\tau \in \mathcal D} J(\bar y^\tau_\veps) + o(1).
\end{align*}
Hence, the liminf-inequality
\begin{align*}
\lim_{\veps \to 0} J_\veps(y_\veps) \geq \sum_{\tau \in \mathcal D} \Phi(x^-(\tau),x^+(\tau))
\end{align*}
holds. The same argument also shows that
\begin{align*}
\liminf_{\veps \to 0} J_\veps(x_\veps) = + \infty,
\end{align*}
if the set $\mathcal D$ is infinite.

To construct a recovery sequence we fix $\delta>0$. By definition for
each $\tau \in \mathcal D$ there exists
$$y^\tau_{\delta} \in X(x^-(\tau),x^+(\tau))$$ such that the support of $\dot y^\tau_{\delta}$ is compact for all
$\tau \in \mathcal D$ and $J(y^\tau_{\delta}) \leq
\Phi(x^-(\tau),x^+(\tau))+\delta$.  It can be checked that for fixed
$\delta>0$ the sequence
$$y_{\veps,\delta}(t) = x(0) + \frac{1}{\veps}\sum_{\tau \in \mathcal D} \int_0^t
\dot y^\tau_\delta((s-\tau)/\veps)\, \dd s $$
converges weak-* in $BV([0,1])$ to $x$ as $\veps \to 0$.

Moreover, there exists a function $\delta(\veps)$ such that
$\lim_{\veps \to 0}\delta(\veps)=o(1)$ and the supports of the functions
$\dot y^\tau_\delta((\cdot -\tau)/\veps)$, $\tau \in \mathcal D$ are disjoint as
$\veps$ tends to 0.
Hence,
\begin{align*}
&J_\veps(y_{\veps,\delta(\veps)}) = \sum_{\tau \in \mathcal D} J_\veps(y^\tau_\delta((\cdot-\tau)/\veps))
\leq \sum_{\tau \in \mathcal D}\Phi(x^-(\tau),x^+(\tau)) + \#\mathcal D \, \delta(\veps)\\
=&\sum_{\tau \in \mathcal D}\Phi(x^-(\tau),x^+(\tau))+o(1)
\end{align*}
as $\veps$ tends to 0.
\end{proof}

\section{Numerical Experiments}
\label{sec:num}

The aim of these numerical experiments is to illustrate
that the $\Gamma-$limit derived in section \ref{sec:gamma}
accurately captures the behaviour
of the problem of minimizing $\Ib$ given by \eqref{eq:pdf}
when $\eps$ is small.
The intuitive picture of the $\Gamma-$limit is that it
is comprised of minimizers with the following properties:
\begin{enumerate}
\item The minimizers are BV functions supported on the
set of critical points of $V$ (see Theorem \ref{t:gamlim}).
\item The contribution to the limit functional from
the jumps in these BV functions (ie from $\Iob$)
can be expressed in terms
of sums of integrals
$\int_{\infty}^{-\infty} |\nabla V(\xs(s))|^2\dd s$
where $\xs$ is a heteroclinic orbit for the Hamiltonian
equations \eqref{eq:el} (see Lemma \ref{lem:lower}, equation
\eqref{eq:one}).
\item Unless both equilibria $x^{\pm}$ are saddle
points, then these heteroclinic
orbits will also be heteroclinic for the gradient
system \eqref{eq:grad} and then the contribution to
the limit functional from $\Iob$
can be expressed in terms of sums
$|V(\xs(\infty))-V(\xs(-\infty))|$
(see Lemma \ref{lem:lower}, equation \eqref{eq:two}).

\item Minimizers $\xs$ will seek
to organize the support of the
limiting BV function so as to minimize the value of
$\int_0^1 \triangle V(x^{\star})(s) \dd s;$
this is the second contribution
to the infimum defined in Theorem \ref{t:gamlim}.
\end{enumerate}

A variety of numerical computations, all of which
exhibit these phenomena on a range of problems,
including high dimensional systems arising
in vacancy diffusion and the Lennard-Jones $38$ cluster,
may be found in the paper \cite{ps09}.
The purpose of this section is to illustrate the four points
above on a single low dimensional example and relate the
results in an explicit way to the theory developed
in earlier sections.
We employ the potential $V:\bbR^2 \to \bbR$ given by
\[
V(x_1,x_2)=
\left(x_1^2+x_2^2\right)
\left((x_1^{\,}-1)^2+x_2^2\right)
\left(x_1^2+(x_2^{\,}-1)^2\right)
\]
which is shown in Figure \ref{fig:example1}.
The potential has three wells of equal depth, situated at $M_0=(0,\,0)$, $M_1=(1,\,0)$,
and $M_2=(0,\,1)$. Saddle points exist at $S_1=\left(  (2+\sqrt{2} )/6, \, (2-\sqrt{2} )/6 \right) $
and (by symmetry) at $S_2=\left(  (2-\sqrt{2} )/6, \, (2+\sqrt{2} )/6 \right) $. The potential is zero at the  minima and attains a value of $2/27$ at the saddles.
The Laplacian of $V$ has value zero at the saddle points,
$4$ at the minimum $M_0$ and $8$ at the minima $M_1$ and $M_2.$

In the following numerical experiments we use gradient
descent to minimize $\Ib$ or $J_{\veps}$ given by
\eqref{eq:Ib} and \eqref{eq:IJ}.
In all the experiments we
employ a value of $\veps^{-1}=10^{-3}$ which proved
to be small enough to exhibit the behaviour of the
$\Gamma-$limit.  We solve the parabolic
PDE arising from the $L^2$ gradient flow for
$\Ib$ (resp. $J_{\veps}$)
by means of a linearly implicit method with
stepsize chosen to ensure decrease of $\Ib$
(resp. $J_{\veps}$) at each time-step.

\begin{figure}[htbp] 
   \centering
\includegraphics[width=4in]{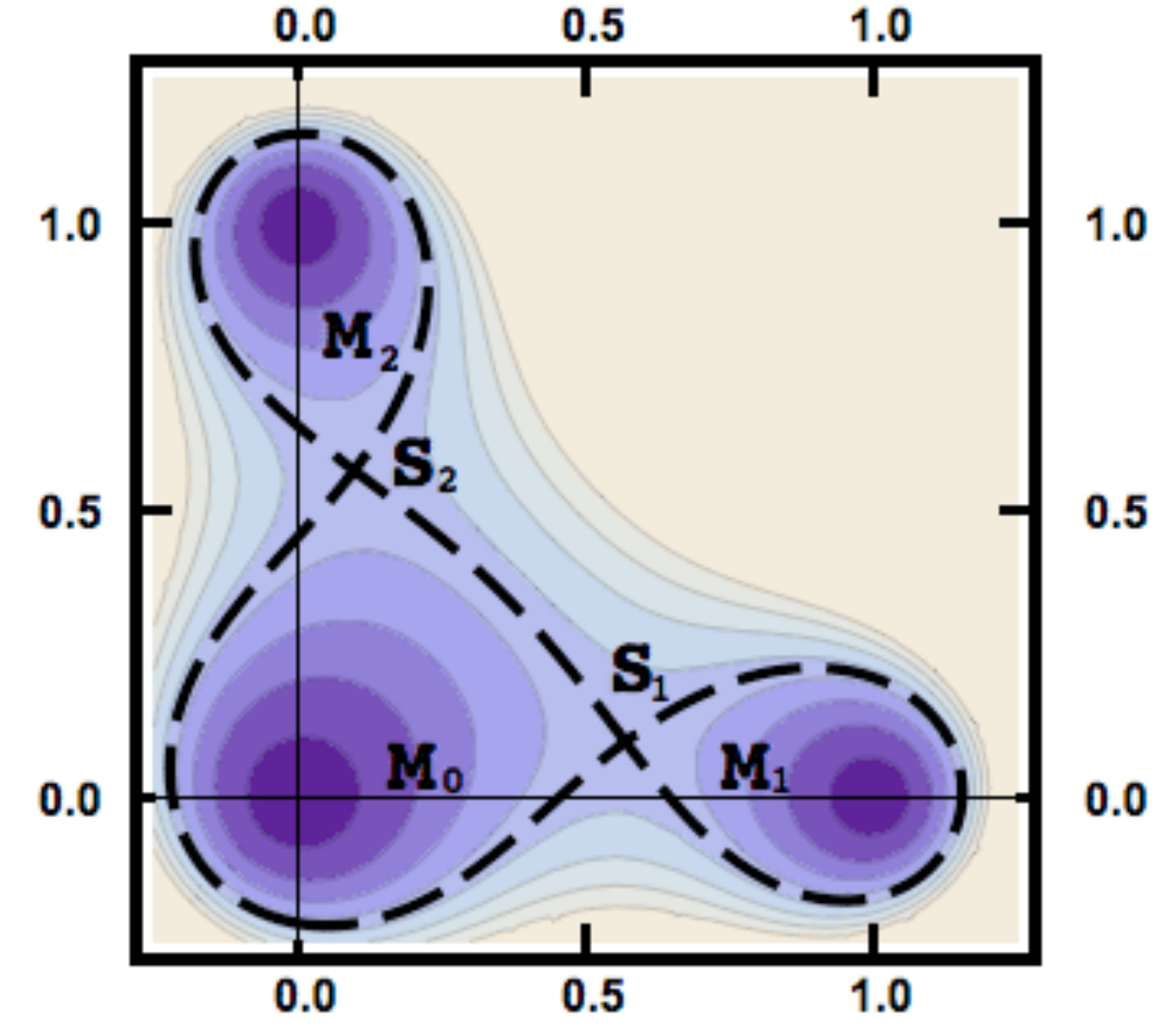}
   \caption{Contour plot of the potential. The dashed black line is
     the equipotential contour that connects the saddle points, $S_1$
     and $S_2$. The three equal-depth minima are labeled: $M_0$,
     $M_1$, and $M_2$.  }
   \label{fig:example1}
\end{figure}

Figure \ref{fig:example2} shows heteroclinic orbits
for the gradient flow \eqref{eq:grad} (in green)
and for the Hamiltonian flow \eqref{eq:el} (in blue).
Figure \ref{fig:example3} shows minimizers of $J_{\veps}$,
both connecting $M_1$ and $M_2$.
It is instructive to compare this figure with the
preceding Figure \ref{fig:example2}. The green
curve in Figure \ref{fig:example3}
connects $M_1$ to $M_2$ via $M_0$ and is
comprised of $4$ segments each of which has been verified
to be approximately given by the gradient heteroclinic orbits
\eqref{eq:grad}.
The blue curve connects $M_1$ to $M_2$ via $S_1$ and $S_2$
and has been verified to be approximately
given by Hamiltonian heteroclinic orbits
satisfying \eqref{eq:el}.
Furthermore, in the gradient case we have verified
that the minimizers obey the sum rule \eqref{eq:two}
This illustrates the connection between minimizers of
$J_{\veps}$ and solutions of the Euler Lagrange
equations for minimizers of $J$, and points 2. and 3. in
particular.

\begin{figure}[htbp] 
   \centering
   \includegraphics[width=4in]{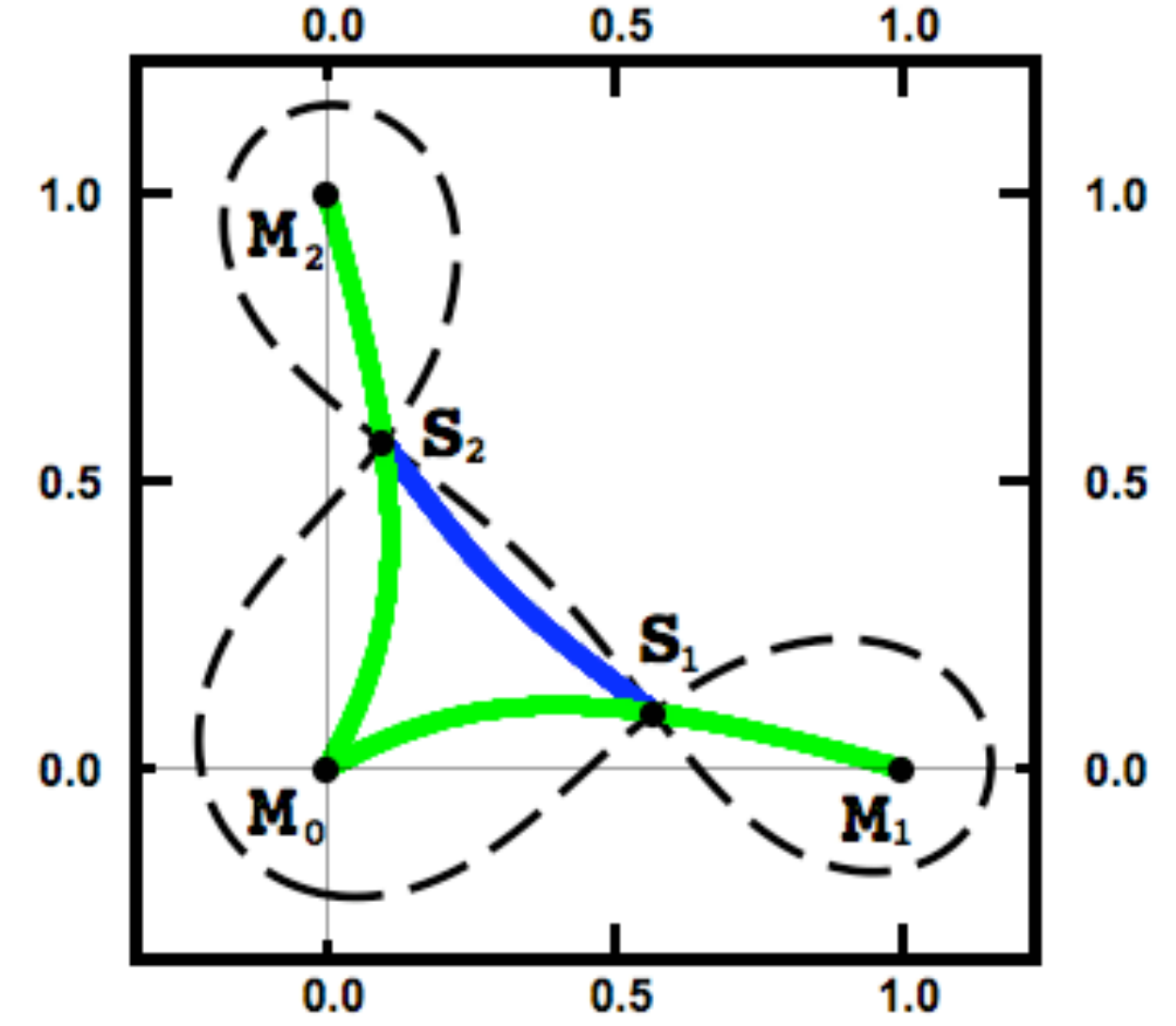}
   \caption{The dashed black line is the equipotential contour that connects the saddle points, $S_1$ and  $S_2$.
   The green line is the path the follows the gradient of the potential from
   $M_1$  to $M_0$ via $S_1$ and then to $M_2$ via $S_2$.
   The blue line is a Hamiltonian minimizer (not a gradient
flow) that  connects $S_1$ to $S_2$.}
   \label{fig:example2}
\end{figure}

\begin{figure}[htbp] 
   \centering
   \includegraphics[width=4in]{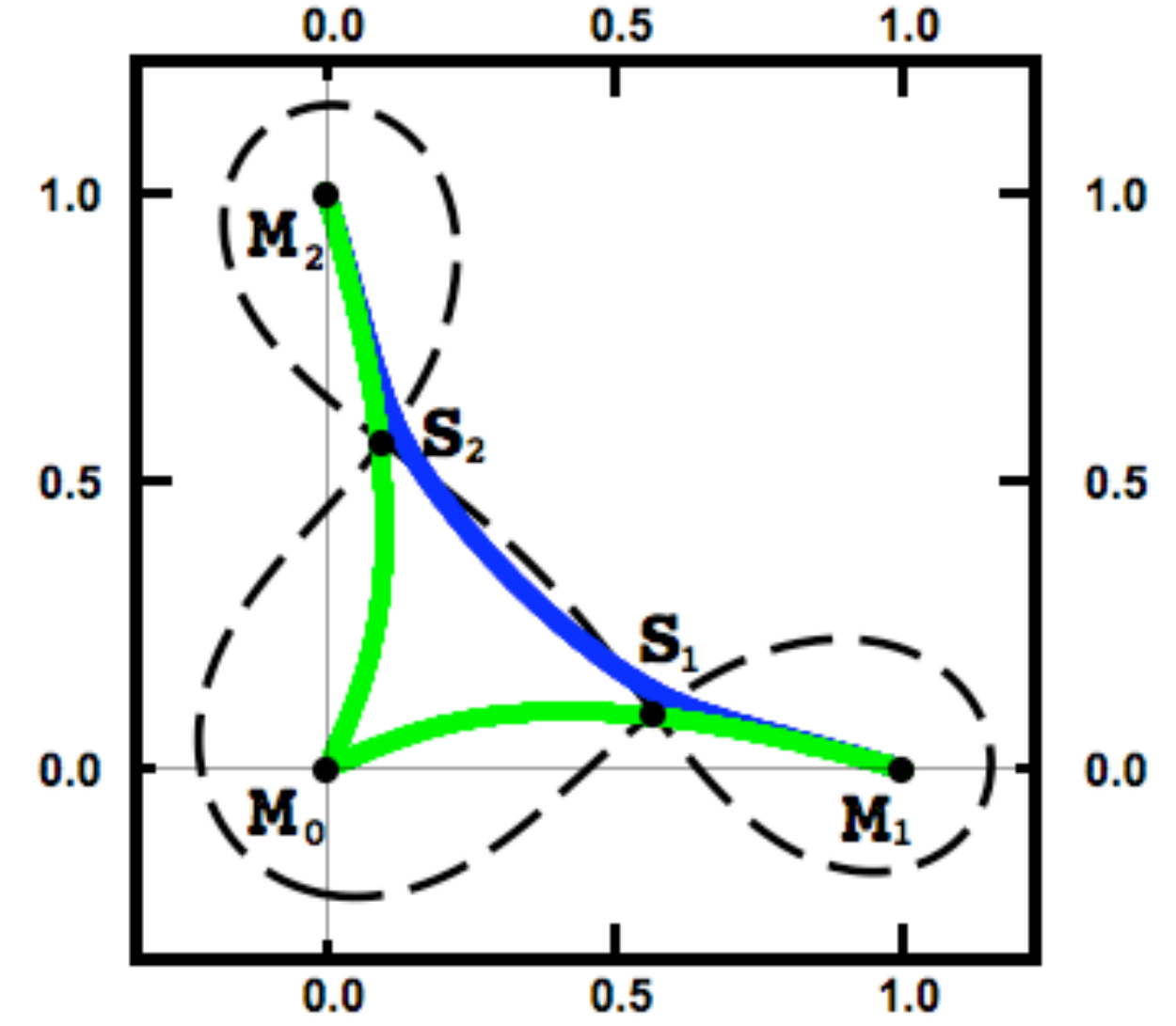}
   \caption{Solutions that minimize $\Iob$ at
$\veps=10^{-3}$.
The dashed black line is the equipotential contour that connects the saddle points, $S_1$ and  $S_2$.
   The green line is the path that starts at
   $M_1$ and proceeds to $M_2$ via $M_0$.
The blue line is the path path that starts at
   $M_1$ and proceeds to $M_2$ avoiding $M_0$. }
 \label{fig:example3}
\end{figure}

Figures \ref{fig:example4} and \ref{fig:example5} show
minimizers of $\Iob$ and $\Ib$ which connect the two
saddle points $S_1$ and $S_2$. Several approximate
 minimizers are shown in each case, found from
different starting points for the gradient flow.
The experiments illustrate
point 1. as they show that the solutions concentrate
on critical points of $V$: in this case simply the
two saddles. They also illustrate point 4. as they show
that, in this case, the minimizers of $\Iob$ and $\Ib$ are
indistinguishable; this is because the Laplacian of $V$
is zero at the saddle points.

Figures \ref{fig:example6} and \ref{fig:example7} also show
minimizers of $\Iob$ and $\Ib$ which connect the two
saddle points $S_1$ and $S_2$. However the starting
points for the gradient flow differ from
those used to generate Figures \ref{fig:example4}
and \ref{fig:example5}; in particular they
are based on
a function which passes through the minimum $M_0$. As
a consequence the minimizers also pass through $M_0.$
For $\Iob$ there are then multiple approximate minimizers,
all supported on $S_1, S_2$ and $M_0$.
However the support
can be organized more or less arbitrarily (provided only
two transitions occur) to obtain approximately the
same value of $\Iob$; we show a solution where the
support is organized symmetrically. The situation for $\Ib$ is quite
different: the effect of the Laplacian of $V$, which is $4$
at $M_0$ and $0$ at $S_1$ and $S_2$, means that minimizers
place most of their support at $M_0.$  The experiments
thus again illustrate point 1. as they show that the
solutions concentrate
on critical points of $V$. They also illustrate point 4.

\begin{figure}[htbp] 
   \centering
   \includegraphics[width=4in]{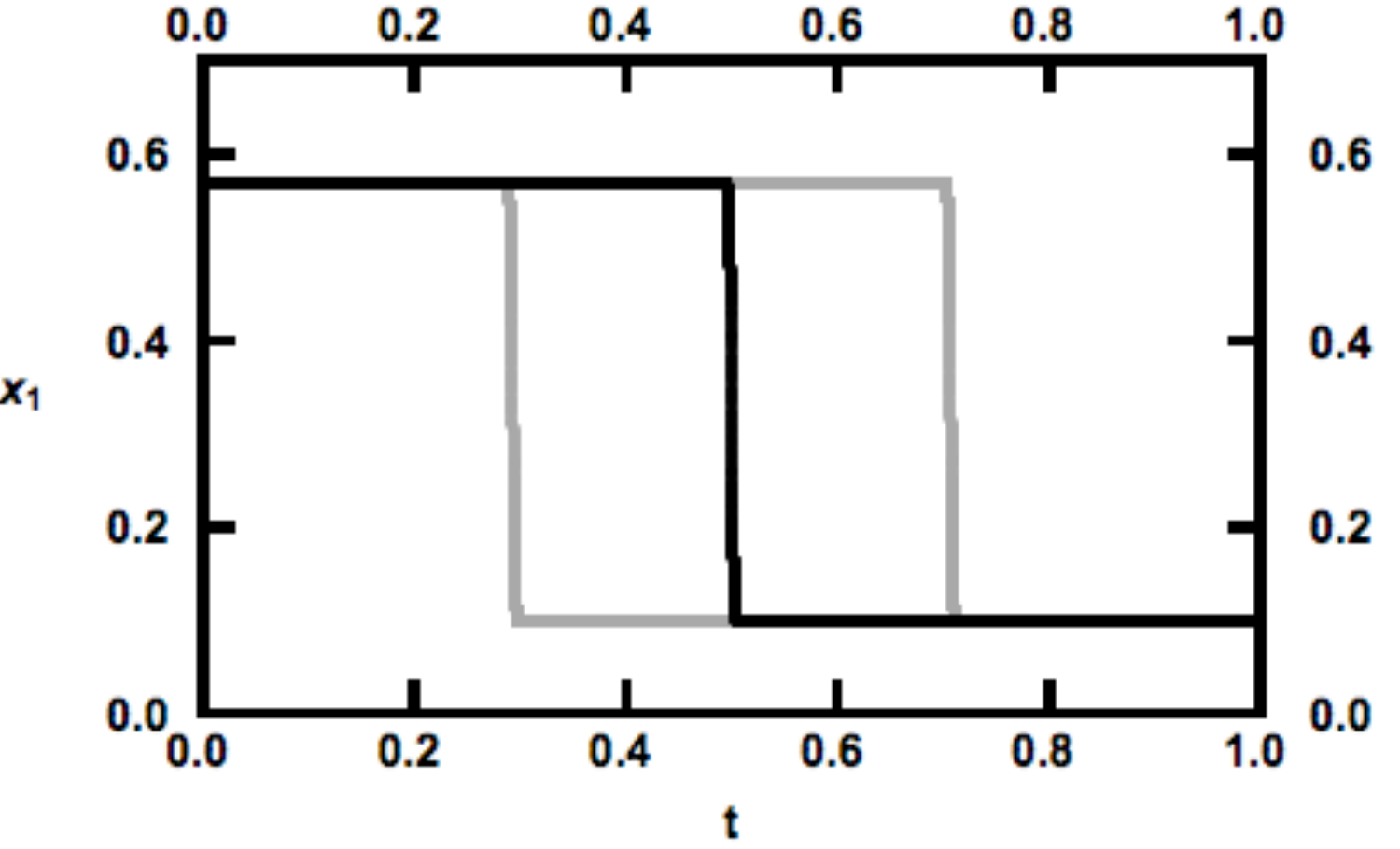}
   \caption{The $x_1$ component of the paths that minimize $\Iob$ at
     $\veps=10^{-3}$. The paths start at $S_1$ and are conditioned to
     end at $S_2$.  The ending path depends on the nature of the
     starting path. The results of using three different starting
     paths are displayed.  None of the initial paths approach the
     origin and thus avoid passing through $M_0$. The black curve
     corresponds to a symmetric minimizer.  }
 \label{fig:example4}
\end{figure}

\begin{figure}[htbp] 
   \centering
   \includegraphics[width=4in]{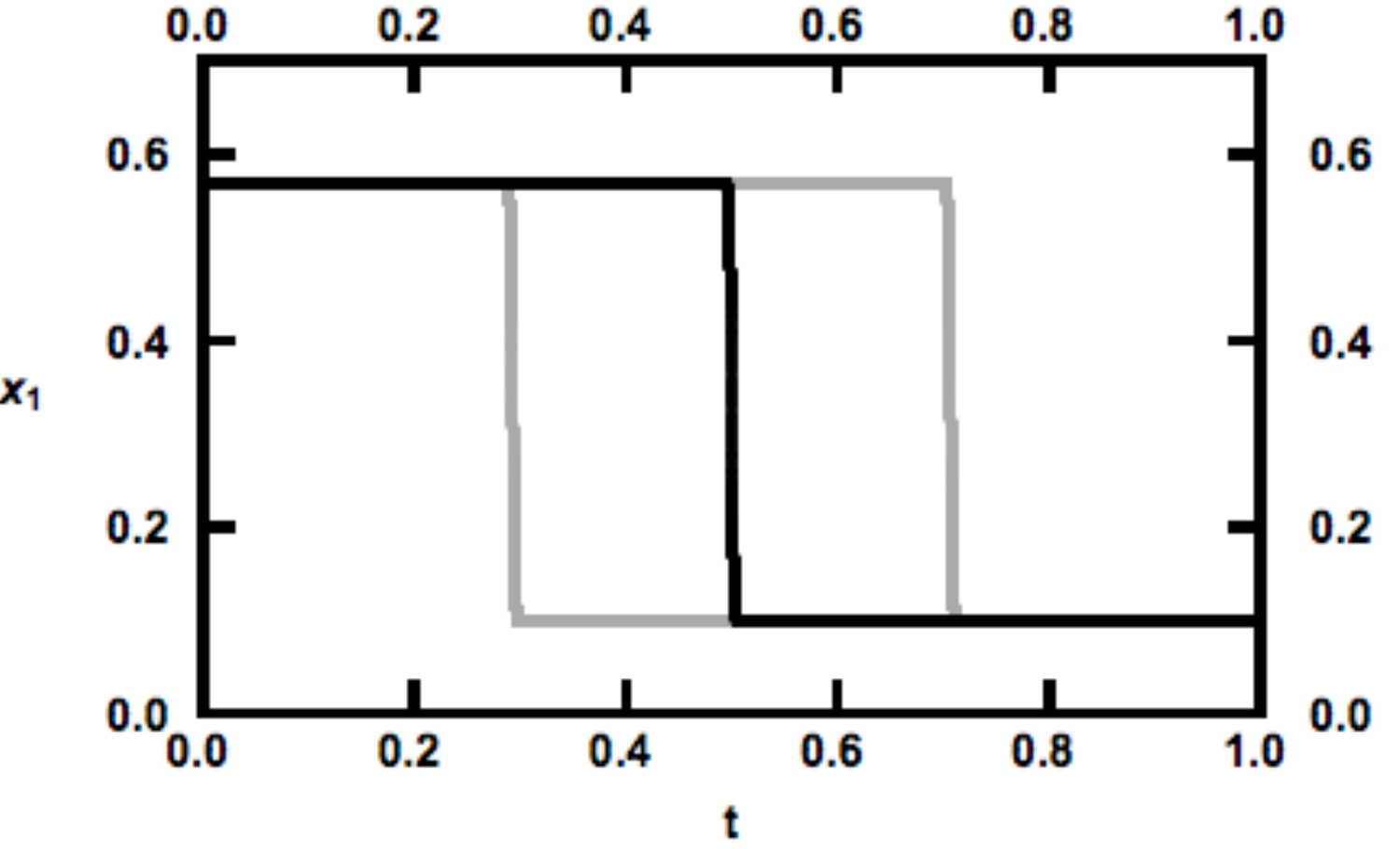}
   \caption{The $x_1$ component of the paths that minimize $\Ib$ at
     $\veps=10^{-3}$.  The paths start at $S_1$ and are conditioned to
     end at $S_2$.  The ending path depends on the nature of the
     starting path. The results of using three different starting
     paths are displayed.  None of the initial paths approach the
     origin and thus avoid passing through $M_0$. The black curve
     corresponds to a symmetric minimizer.  Note that this plot is
     indistinguishable from the previous one.  }
 \label{fig:example5}
\end{figure}

\begin{figure}[htbp] 
   \centering
   \includegraphics[width=4in]{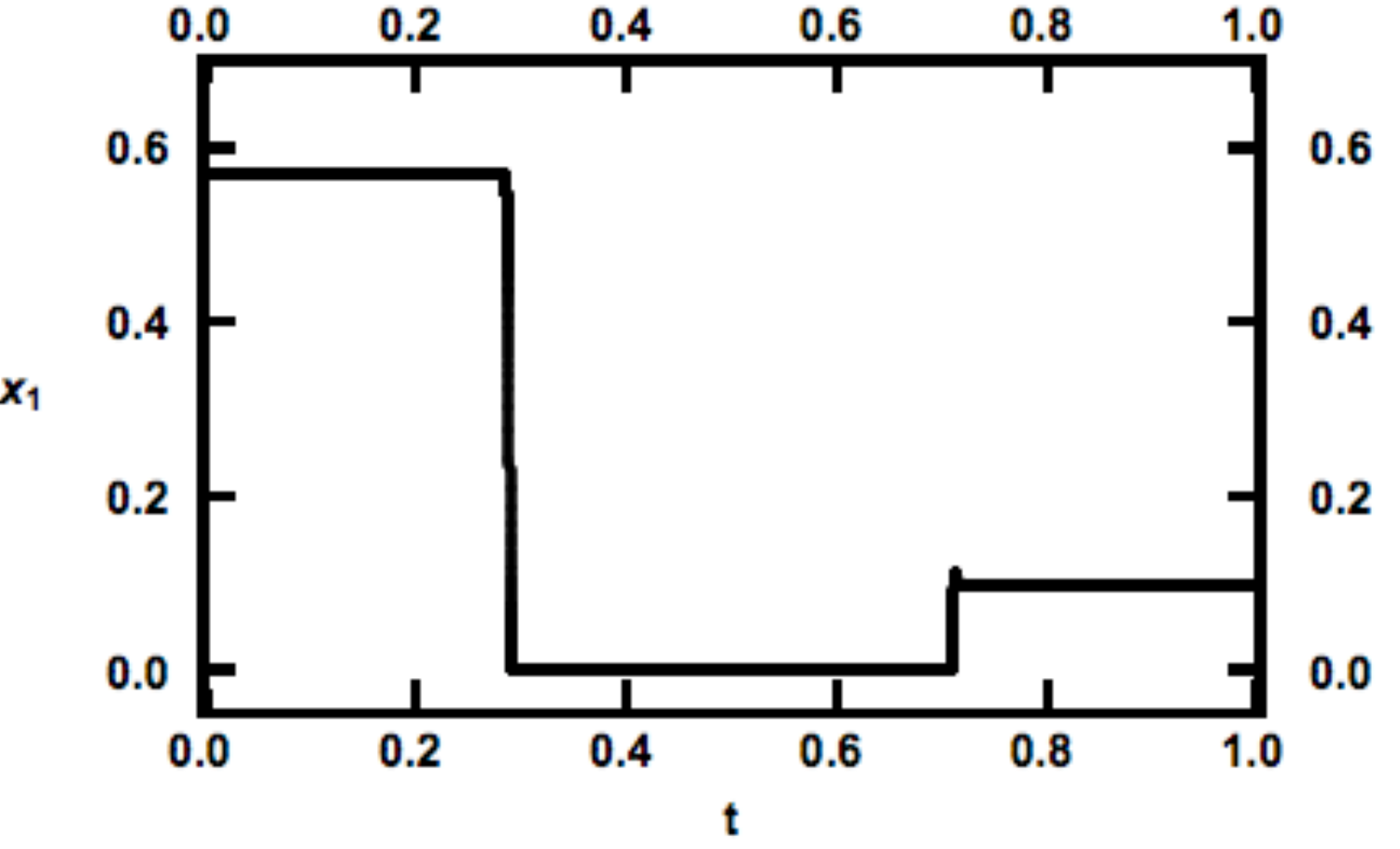}
 \caption{The $x_1$ component of the path that minimize $\Iob$ at
$\veps=10^{-3}$. The path starts at $S_1$ and is conditioned to end at $S_2$.  The initial path contains the origin $M_0$
and thus the ending path spends an arbitrary fraction of the time at $M_0$.  The ending path depends on the nature of the starting path, in particular where the path crosses the origin.  Only the symmetric minimizer is displayed here. }
 \label{fig:example6}
\end{figure}

\begin{figure}[htbp] 
   \centering
   \includegraphics[width=4in]{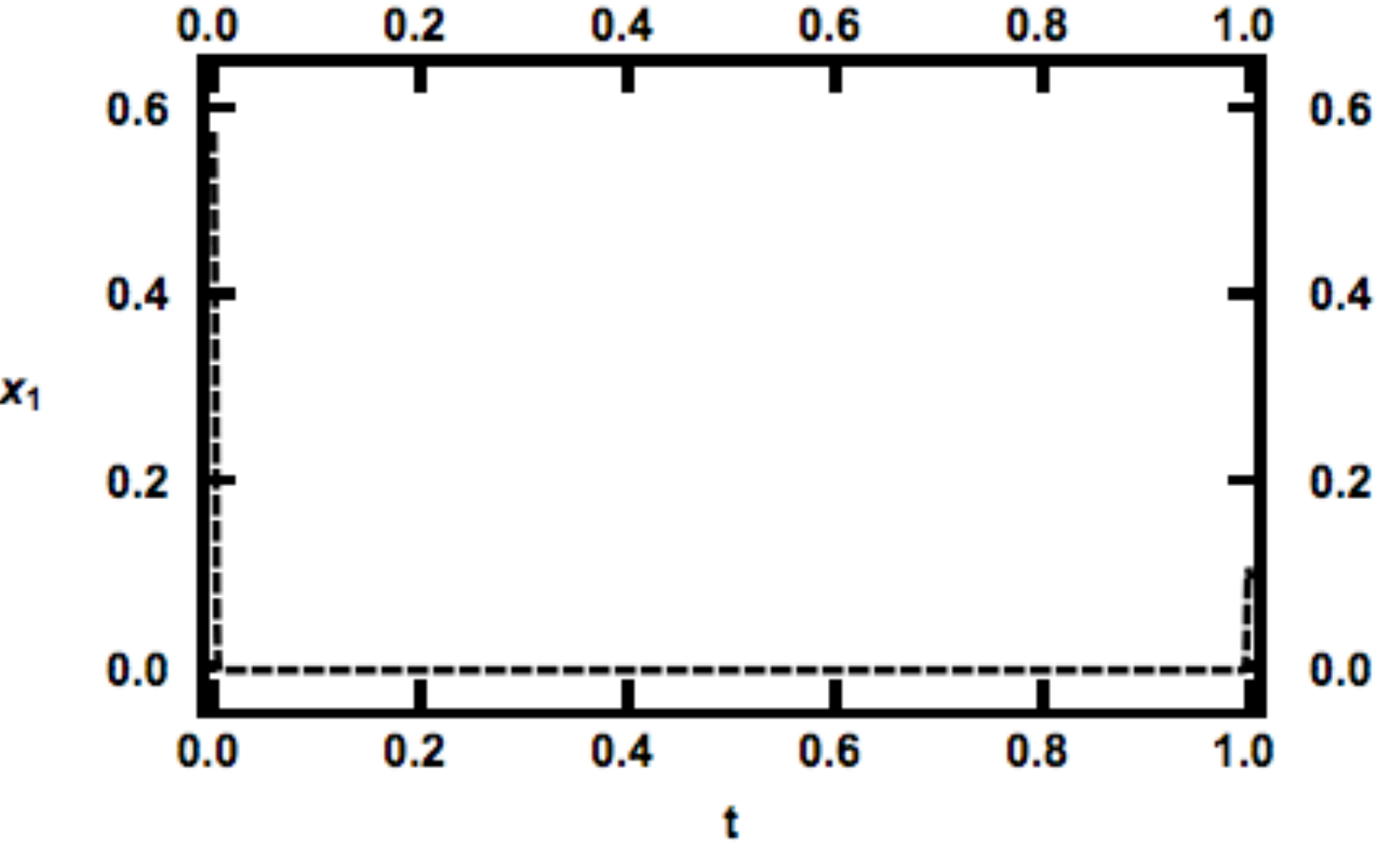}
   \caption{The $x_1$ component of the path that minimize $\Ib$
at $\veps=10^{-3}$. The path starts at $S_1$ and is conditioned to end at $S_2$.  The initial path contains the origin $M_0$
and thus the ending path is dominated by $M_0$.   }
 \label{fig:example7}
\end{figure}

Figures \ref{fig:example8} and \ref{fig:example9} again show
minimizers of $\Iob$ and $\Ib$, now connecting the two
minima $M_1$ and $M_2$, and constructed to pass through
the other minimum $M_0$ and the two saddle points $S_1$
and $S_2$.
For $\Iob$ there are then multiple approximate minimizers,
all supported on the five critical points, one
of which is shown in Figure \ref{fig:example8},
a solution where the support is organized symmetrically.
The situation for $\Ib$ is again very different:
the effect of the Laplacian of $V$, which is $8$
at $M_1$ and $M_2$, means that minimizers
place most of their support at these two points,
as shown in Figure \ref{fig:example9}. The single
interface in fact contains several transitions,
and hence several contributions to the $\Gamma-$limit.
Furthermore this single interface can be placed
arbitrarily; we have shown a symmetric case.
The experiments once again illustrate points 1. and 4.

\begin{figure}[htbp] 
   \centering
   \includegraphics[width=4in]{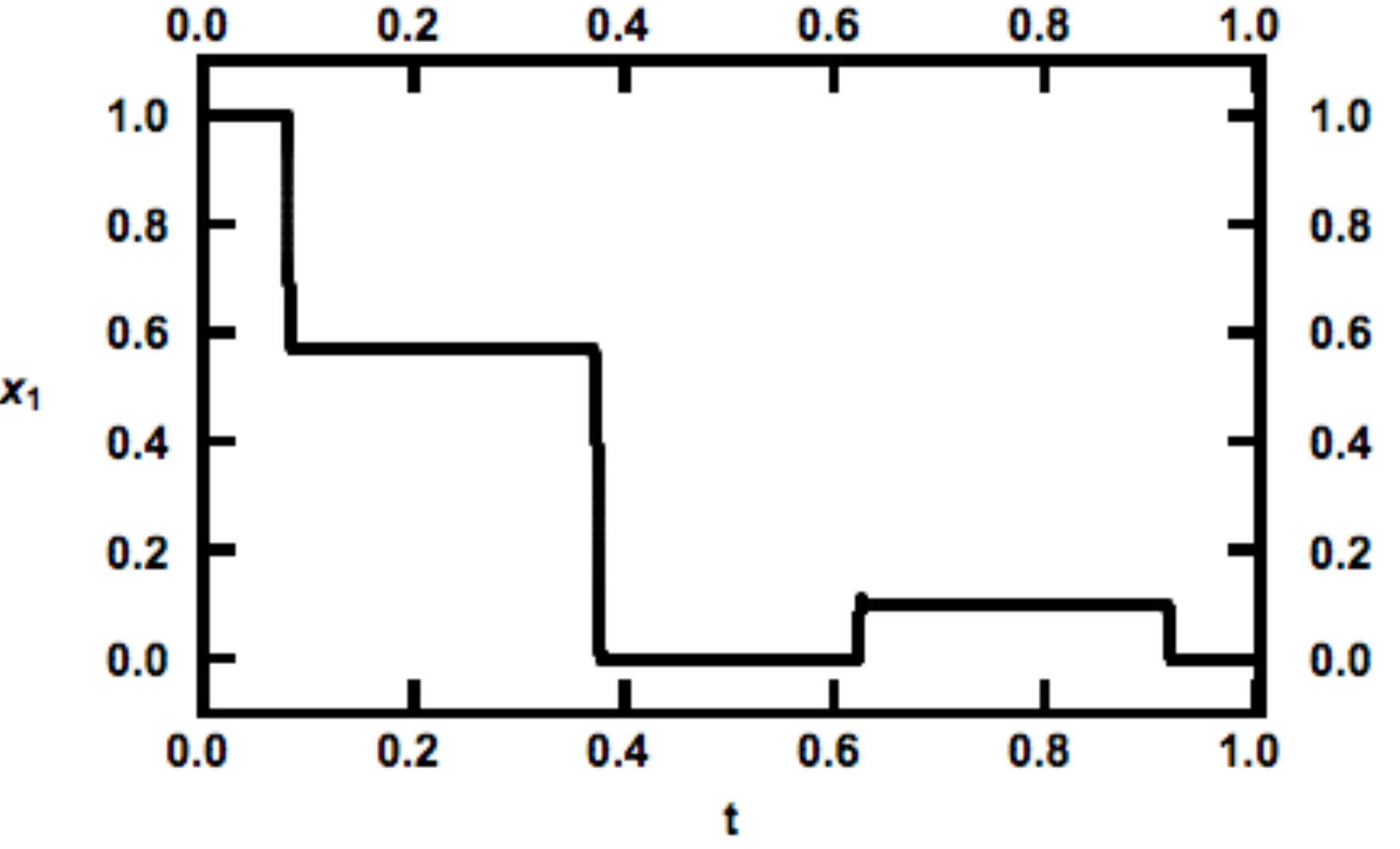}
 \caption{The $x_1$ component of the path that minimize $\Iob$ at $\veps=10^{-3}.$
The path starts at $M_1$ and is conditioned to end at $M_2$.  The path breaks into several segments. The initial path contains the origin and thus the "ending" path passes through  $M_0$.
 The ending path depends on the nature of the starting path, in particular where the path crosses the origin.  Only the symmetric minimizer is displayed here. }
 \label{fig:example8}
\end{figure}

\begin{figure}[htbp] 
   \centering
   \includegraphics[width=4in]{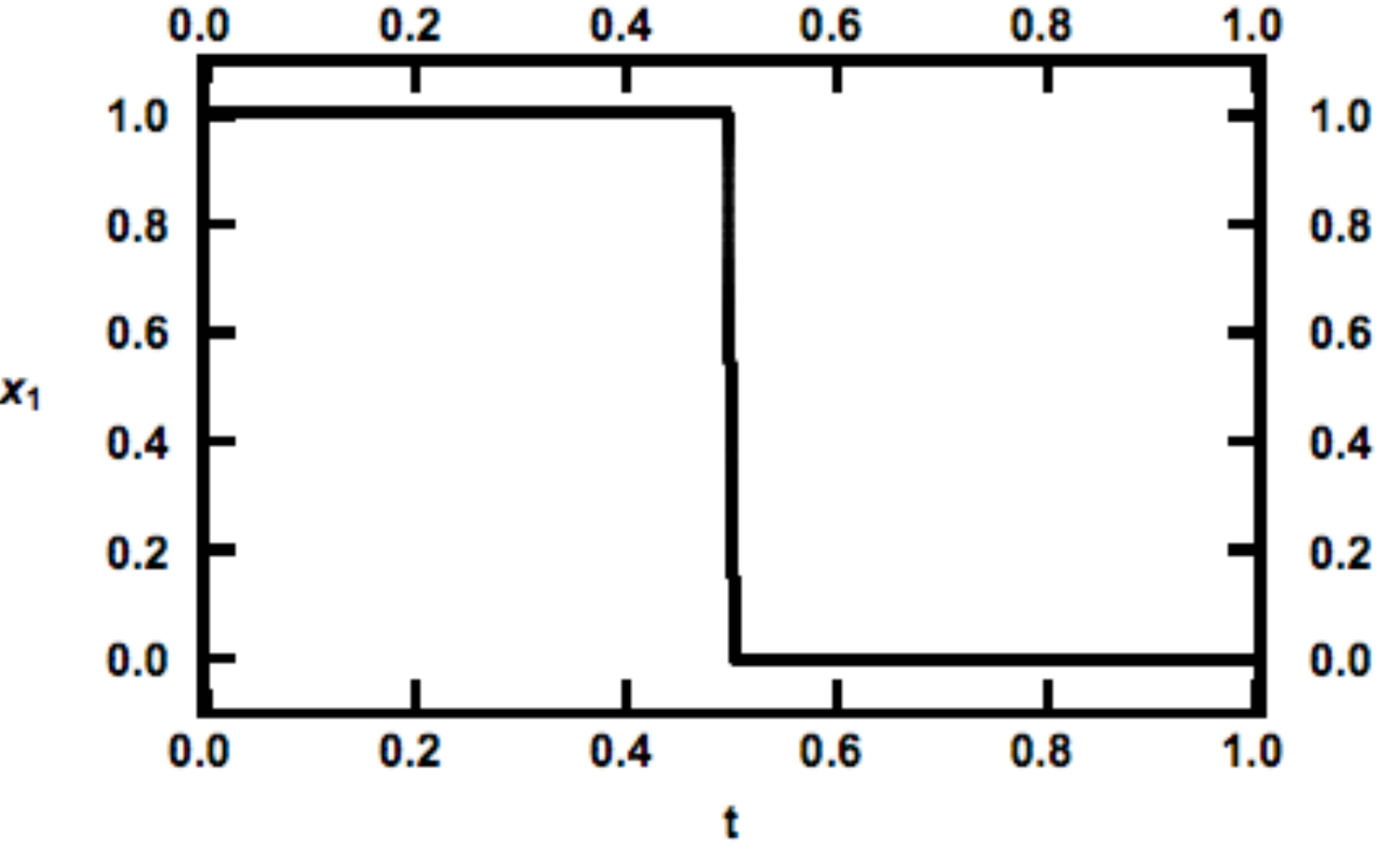}
 \caption{The $x_1$ component of the path that minimize $\Ib$
at $\beta=1000$. The path starts at $M_1$ and is conditioned to end at $M_2$.  The initial path does not approach the origin $M_0$
and thus the ending path also avoids $M_0$.   Only the symmetric minimizer is displayed here. Note that all the "activity" is
 scrunched into a small time interval.}
 \label{fig:example9}
\end{figure}

\vspace{0.2in}

\noindent{\bf Acknowledgements.} The authors are grateful
to Eric Vanden Eijnden for helpful disucssions. AMS is grateful
to EPSRC and ERC for financial support.

\bibliography{gamma}
\bibliographystyle{unsrt}

\end{document}